\documentclass[11pt]{article}

\usepackage[utf8]{inputenc}
\usepackage[T1]{fontenc}
\usepackage{lmodern}
\usepackage{microtype}

\usepackage[margin=1.15in]{geometry}

\usepackage{amsmath,amssymb,amsthm,mathtools}

\usepackage[colorlinks=true,
            linkcolor=blue,
            citecolor=blue,
            urlcolor=blue]{hyperref}
\usepackage{booktabs}
\usepackage{enumitem}

\theoremstyle{plain}
\newtheorem{theorem}{Theorem}[section]
\newtheorem{prop}[theorem]{Proposition}
\newtheorem{lemma}[theorem]{Lemma}
\newtheorem{cor}[theorem]{Corollary}

\theoremstyle{definition}

\theoremstyle{remark}
\newtheorem{remark}[theorem]{Remark}

\newcommand{\e}{\mathrm{e}}

\DeclareMathOperator{\argmax}{arg\,max}

\title{Asymptotic Rounding Laws for Optimal Thresholds in Secretary Problems}

\author{Ra\'ul S\'anchez Gal\'an}

\date{\today}

\begin{document}

\maketitle

\begin{abstract}
We study second-order asymptotics for optimal thresholds in secretary-type
problems. Under a unimodality assumption and a local affine expansion of the
discrete payoff increments, we establish a general principle showing that
sufficiently accurate rational approximations to the leading threshold
proportion yield exact optimal integer thresholds. Continued-fraction
convergents provide an immediate application. We also derive an asymptotic
rounding law and apply the results to several variants of the secretary problem with fixed and random horizons.
\end{abstract}

% ============================================================
% ============================================================
\section{Introduction}
% ============================================================

The classical secretary problem is a standard example in optimal stopping
theory; see, for example,
\cite{GilbertMosteller1966} and the historical account
\cite{Ferguson1989}. A fixed number \(n\) of applicants is observed in
uniformly random order, and the objective is to maximize the probability of
selecting the best one. The optimal rule is of threshold type: reject the
first \(r\) applicants and then accept the first subsequent record. Its
optimal threshold satisfies
\[
\frac{r_n^{\mathrm{opt}}}{n}\longrightarrow\frac1{\e}.
\]
Many variants exhibit the same phenomenon, with an optimal threshold
asymptotic to \(\ell n\) for some constant \(\ell\in(0,1)\).

The first-order constant \(\ell\), however, does not determine how the
continuous approximation is converted into an exact integer threshold. The
purpose of this paper is to study this second-order question. If \(V_n(r)\)
denotes the expected payoff of a threshold rule, we consider its discrete
increments
\[
\Delta_n(r)=V_n(r)-V_n(r-1)
\]
and a positive rescaling \(D_n(r)\). Our main assumption is a local
asymptotic expansion of the form
\[
D_n(\ell n+u)
=
A(u-c)+o(1),
\qquad
A<0,\qquad 0<c<1,
\]
uniformly for \(|u|<2\). The constant \(c\) determines the position of the
sign change of the increments relative to \(\ell n\).

Under a natural unimodality hypothesis, this local expansion yields an
asymptotic rounding law. For all sufficiently large \(n\), the optimal
threshold lies in
\[
\{\lfloor\ell n\rfloor,\lfloor\ell n\rfloor+1\},
\]
and, away from a shrinking transition region,
\[
r_n^{\mathrm{opt}}
=
\lfloor\ell n+c\rfloor.
\]
Thus \(c\) is the second-order parameter governing the rounding of the
leading approximation \(\ell n\). Moreover, if integers \(p_j,q_j\) satisfy
\[
q_j\longrightarrow\infty,
\qquad
p_j-\ell q_j\longrightarrow0,
\]
then \(p_j\) is eventually the unique optimal threshold for the problem with
parameter \(q_j\). Continued-fraction convergents provide an immediate
arithmetic application, generalizing the exact \(1/\e\) phenomenon
established for the classical secretary problem in~\cite{Sanchez2027}.

We apply the general result to several secretary-type models. These include
fixed-horizon problems with harmonic threshold increments, the secretary
problem with uncertain employment, power-biased random horizons, uniform
random-horizon win--lose--or--draw problems, and the uniform random-horizon
Best-or-Worst and Postdoc problems. In each case, we identify the leading
proportion \(\ell\) and the second-order constant \(c\), and verify the
required local expansion. 
The Poisson model illustrates the shifted form of the general principle.
Although its leading threshold proportion is again \(\ell=1/\e\), its
second-order constant lies in \((-1,0)\). Consequently, if \(p/q\) is a
sufficiently large continued-fraction convergent of \(1/\e\), then \(p-1\),
rather than \(p\), is the unique optimal threshold for the Poisson model
with mean \(q\).

% ============================================================
\section{General continued-fraction threshold principle}
\label{sec:general_principle}
% ============================================================

For convenience, we introduce the notation,
\begin{equation*}
    I_n := \{0,1,\dots,n\}, \qquad I_n^* := \{1,\dots,n\}.
\end{equation*}

A \emph{single-threshold problem with parameter $n$} is an optimization problem in which the stopping rules are given by an integer threshold $r\in I_n$.

For each \(n\) and \(r\in I_n\), let \(X_{n,r}\) denote the random payoff
obtained by using threshold \(r\), and set
\[
V_n(r):=\mathbb E[X_{n,r}],
\]
where the expectation is taken over all sources of randomness in the model.
We define the optimal threshold to be the largest maximizer,
\[
r_n^{\mathrm{opt}}
:=
\max \{\argmax_{r\in I_n}V_n(r)\}.
\]
Throughout the paper, optimality refers to optimality within the class of single-threshold stopping rules.

In the secretary problems considered below, \(r\) is the number of initial
applicants rejected before the acceptance rule is activated. In the classical
secretary problem, \(X_{n,r}\) is the indicator of the event that the best
applicant is selected, so \(V_n(r)\) is the corresponding success
probability. In variants with rewards and penalties, \(X_{n,r}\) is the
payoff prescribed by the model.

A first-order description of the optimal threshold is given, when the limit
exists, by its leading asymptotic proportion
\begin{equation}\label{eq:first_order_asymp}
\ell
:=
\lim_{n\to\infty}
\frac{r_n^{\mathrm{opt}}}{n}
\in(0,1).
\end{equation}
In the applications below, this limit will be identified explicitly for each
model.

We define the discrete increments by
\[
\Delta_n(r)
:=
V_n(r)-V_n(r-1),
\qquad
r\in I_n^*.
\]

The key observation is that 
if the sequence
\(\{V_n(r)\}_{r=0}^n\) is \emph{unimodal}, i.e., there exist
\(0\leq a_n\leq b_n\leq n\) such that
\[
V_n(0)\leq\cdots\leq V_n(a_n)
=\cdots=V_n(b_n)
\geq\cdots\geq V_n(n),
\]
then an optimal threshold is detected by a change of sign of the increments (from positive to negative). If \(p_j/q_j\) approximates the leading proportion \(\ell\), then \(p_j\) will be the exact optimal threshold whenever
\(\Delta_{q_j}(p_j)>0\) and
\(\Delta_{q_j}(p_j+1)<0\). The next result gives a simple local asymptotic
condition guaranteeing precisely this sign pattern.

\begin{theorem}
\label{thm:general_threshold_principle}
Consider a family of single-threshold problems $\{ \mathcal F_n\}_{n \geq 1}$
where each \(\mathcal F_n\) has payoff function \(V_n\). Assume that, for each
\(n\), the finite sequence
$\{V_n(r)\}_{r=0}^{n}$
is unimodal. Suppose that there exists a sequence \(\{k_n\}_{n\geq1}\) of positive constants such that the rescaled increments 
\[
D_n(r)=k_n\Delta_n(r)
\]
satisfy the following uniform local asymptotic expansion: there exist constants \(A<0\), \( c, \ell \in(0,1)\), such that, 
\begin{equation}\label{eq:general_local_expansion}
\varepsilon_n
:=
\sup_{\substack{|u|<2\\ \ell n+u\in I_n^*}}
\left|
D_n(\ell n+u)-A(u-c)
\right|
\longrightarrow0
\qquad\text{as }n\to\infty.
\end{equation}

Let \((p_j,q_j)_{j\geq1}\) be a sequence of integer pairs such that \[ q_j\longrightarrow\infty, \qquad p_j-\ell q_j\longrightarrow0. \] 

Then \(p_j\) is the unique optimal threshold for \(\mathcal F_{q_j}\) for all sufficiently large \(j\).\end{theorem}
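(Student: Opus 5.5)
The plan is to reduce the claim to a sign pattern of the increments, as in the discussion before the statement. By unimodality, if \(\Delta_{q_j}(p_j)>0\) and \(\Delta_{q_j}(p_j+1)<0\), then \(p_j\) is the unique maximizer of \(V_{q_j}\). Indeed, with \(V_n(0)\le\cdots\le V_n(a_n)=\cdots=V_n(b_n)\ge\cdots\ge V_n(n)\), a strictly positive increment at \(r=p_j\) forces \(p_j\le a_n\): otherwise \(p_j-1\) and \(p_j\) would both lie in \([a_n,n]\), where the sequence is nonincreasing. Likewise a strictly negative increment at \(p_j+1\) forces \(p_j\ge b_n\). Hence \(a_n=b_n=p_j\), and the plateau of maximizers is the singleton \(\{p_j\}\). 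The largest-maximizer convention then makes no difference. Since \(k_n>0\), the signs of \(\Delta_n\) and \(D_n\) agree, so it suffices to prove \(D_{q_j}(p_j)>0\) and \(D_{q_j}(p_j+1)<0\) for all large \(j\).

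Write \(n=q_j\) and \(u_j:=p_j-\ell q_j\), so \(u_j\to0\). Then \(p_j=\ell n+u_j\) and \(p_j+1=\ell n+(u_j+1)\). For large \(j\) we have \(|u_j|<1\), so both offsets satisfy \(|u|<2\). We also need \(p_j,p_j+1\in I_n^*\), that is, \(1\le p_j\) and \(p_j+1\le n\). This follows from \(p_j=\ell q_j+o(1)\) with \(0<\ell<1\) and \(q_j\to\infty\): \(p_j\to\infty\) and \(q_j-p_j-1\to\infty\). Both points are therefore admissible in the supremum defining \(\varepsilon_n\), and
\[
D_{q_j}(p_j)\ge A(u_j-c)-\varepsilon_{q_j},\qquad D_{q_j}(p_j+1)\le A(u_j+1-c)+\varepsilon_{q_j}.
\]

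Now let \(j\to\infty\). Because \(A<0\) and \(0<c<1\), we have \(A(u_j-c)\to -Ac>0\) and \(A(u_j+1-c)\to A(1-c)<0\), while \(\varepsilon_{q_j}\to0\) since \(q_j\to\infty\). For instance, once \(|u_j|<\tfrac12\min(c,1-c)\) and \(\varepsilon_{q_j}<\tfrac12|A|\min(c,1-c)\), the first bound gives
\[
D_{q_j}(p_j)\ge |A|(c-u_j)-\varepsilon_{q_j}>|A|c-\tfrac12|A|c-\tfrac12|A|c=0 .
\]
Symmetrically, \(D_{q_j}(p_j+1)\le -|A|(1-c-u_j)+\varepsilon_{q_j}<0\). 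The sign pattern therefore holds for all large \(j\), and the first paragraph finishes the proof.

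The only delicate step is the unimodality argument. One has to check carefully that the strict inequalities pin the plateau down to a single point, including when \(p_j\) sits at an endpoint of the plateau, and that the weak inequalities in the definition of unimodality cannot allow a second maximizer. The asymptotic step is routine, provided we verify that the evaluation points lie in the range \(\ell n+u\in I_n^*\), \(|u|<2\), over which the supremum in \eqref{eq:general_local_expansion} is taken.
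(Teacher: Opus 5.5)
Your proposal is correct and is essentially the paper's proof: with $u_j=p_j-\ell q_j\to0$, the local expansion at the offsets $u_j$ and $1+u_j$ gives $D_{q_j}(p_j)\to -Ac>0$ and $D_{q_j}(p_j+1)\to A(1-c)<0$, positivity of $k_{q_j}$ transfers the signs to the increments, and unimodality yields uniqueness. You spell out that last step more carefully than the paper does: once $a_n=b_n=p_j$, monotonicity gives $V_{q_j}(r)\le V_{q_j}(p_j-1)<V_{q_j}(p_j)$ for $r<p_j$ and $V_{q_j}(r)\le V_{q_j}(p_j+1)<V_{q_j}(p_j)$ for $r>p_j$, which already settles the concern in your final paragraph. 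There is one harmless sign slip: the second bound should read $D_{q_j}(p_j+1)\le -|A|(1-c+u_j)+\varepsilon_{q_j}$, and it is still negative under your conditions on $u_j$ and $\varepsilon_{q_j}$.
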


\begin{proof}
Set
\[
s_j:=p_j-\ell q_j,
\]
so that \(s_j\to0\). Since \(\ell\in(0,1)\) and \(q_j\to\infty\), we have
\[
p_j,p_j+1\in I_{q_j}^*
\]
for all sufficiently large \(j\). Moreover, both \(s_j\) and \(1+s_j\)
eventually have absolute value less than \(2\). Hence
\eqref{eq:general_local_expansion} gives
\[
D_{q_j}(p_j)
=
A(s_j-c)+o(1)
\longrightarrow
-Ac>0
\]
and
\[
D_{q_j}(p_j+1)
=
A(1+s_j-c)+o(1)
\longrightarrow
A(1-c)<0.
\]
Thus, for all sufficiently large \(j\),
\[
D_{q_j}(p_j)>0,
\qquad
D_{q_j}(p_j+1)<0.
\]

Since \(D_{q_j}=k_{q_j}\Delta_{q_j}\) and \(k_{q_j}>0\), the same sign
relations hold for the original increments. Therefore,
\[
V_{q_j}(p_j-1)
<
V_{q_j}(p_j)
>
V_{q_j}(p_j+1).
\]
By unimodality, \(p_j\) is the unique global maximizer. 
\end{proof}

\begin{cor}
If \(\ell\) is irrational, then every sufficiently large
continued-fraction convergent \(p/q\) of \(\ell\) gives the unique optimal
threshold \(p\) for \(\mathcal F_q\). If \(\ell=a/b\) is rational in lowest
terms, then, for every sufficiently large \(m\), the threshold \(am\) is the
unique optimal threshold for \(\mathcal F_{bm}\).
\end{cor}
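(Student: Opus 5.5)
The plan is to deduce the corollary directly from Theorem~\ref{thm:general_threshold_principle} by exhibiting, in each case, a sequence of integer pairs \((p_j,q_j)\) with \(q_j\to\infty\) and \(p_j-\ell q_j\to0\). Once such a sequence is in hand, the theorem states that \(p_j\) is eventually the unique optimal threshold for \(\mathcal F_{q_j}\). The standing hypotheses of the theorem (unimodality and the local expansion \eqref{eq:general_local_expansion}) are implicitly assumed in the corollary, so no further analytic work is needed.

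\emph{Irrational case.} Let \(p_j/q_j\) be the continued-fraction convergents of \(\ell\). Since \(\ell\) is irrational, the expansion is infinite. The denominators are strictly increasing from \(j\ge 1\) on, because \(q_{j+1}=a_{j+1}q_j+q_{j-1}\) with \(a_{j+1}\ge1\); hence \(q_j\to\infty\). The classical estimate \(|\ell-p_j/q_j|<1/(q_jq_{j+1})\) gives
\[
|p_j-\ell q_j|<\frac{1}{q_{j+1}}\longrightarrow0 .
\]
So the theorem applies. It yields an index \(J\) such that for all \(j\ge J\), \(p_j\) is the unique optimal threshold for \(\mathcal F_{q_j}\), which is the meaning of ``every sufficiently large convergent.'' The only mild point is phrasing: ``sufficiently large convergent'' should be read along the index \(j\). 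Equivalently, it can be read as \(q\) large, since the \(q_j\) are strictly increasing.

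\emph{Rational case.} Take \(\ell=a/b\) and the pairs \((p_m,q_m)=(am,bm)\) for \(m\ge1\). Then \(q_m=bm\to\infty\) and \(p_m-\ell q_m=am-(a/b)bm=0\), so the hypothesis \(p_m-\ell q_m\to0\) holds trivially. The theorem then gives that \(am\) is the unique optimal threshold for \(\mathcal F_{bm}\) for all large \(m\).

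I do not expect a genuine obstacle here. The only step requiring an external input is the standard convergent bound \(|q_j\ell-p_j|<1/q_{j+1}\) together with \(q_j\to\infty\) for an infinite continued fraction, both of which I would quote from classical continued-fraction theory.
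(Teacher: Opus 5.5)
Your proposal is correct and is exactly the argument the paper intends: the corollary is stated without a separate proof, as an immediate application of Theorem~\ref{thm:general_threshold_principle}. In the irrational case you use the convergent bound \(|p_j-\ell q_j|<1/q_{j+1}\) together with \(q_j\to\infty\), and in the rational case the exact identity \(am-\ell\,bm=0\) (which does not even need the lowest-terms assumption).
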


Theorem~\ref{thm:general_threshold_principle} also applies to accurate
integer approximations that are not necessarily continued-fraction
convergents. The classical secretary problem provides a simple example.

\begin{cor}
\label{cor:derangement_threshold}
For every sufficiently large integer \(n\), the classical secretary problem
with \(n!\) applicants has the \(n\)-th derangement number \(!n\) as its unique optimal
threshold.
\end{cor}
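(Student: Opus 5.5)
We plan to apply Theorem~\ref{thm:general_threshold_principle} to the classical secretary problem with the sequence $q_j = j!$ and $p_j = {!j}$. This needs three ingredients: unimodality of $V_n$, the local expansion \eqref{eq:general_local_expansion} with $\ell = 1/\e$ and some $c \in (0,1)$, and the approximation $p_j - q_j/\e \to 0$.

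The arithmetic step is standard. From $!j = j!\sum_{k=0}^{j}(-1)^k/k!$ we get
\[
{!j} - \frac{j!}{\e} = -\,j!\sum_{k>j}\frac{(-1)^k}{k!},
\]
and the alternating tail is bounded in absolute value by $j!/(j+1)! = 1/(j+1) \to 0$. In particular $!j$ is the nearest integer to $j!/\e$ for $j \ge 1$.

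For the model we use $V_n(r) = \frac{r}{n}\sum_{i=r}^{n-1}\frac1i$ for $r \ge 1$, with $V_n(0) = 1/n$. The increments are
\[
\Delta_n(r) = \frac1n\left(\sum_{i=r}^{n-1}\frac1i - 1\right), \qquad r \ge 2.
\]
This expression is strictly decreasing in $r$, so the sign changes at most once. The boundary case $r = 1$ has $\Delta_n(1) = 0$ and is harmless. Hence unimodality holds.

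Next we take $k_n = n^2$ and write $H$ for harmonic numbers, so that
\[
D_n(r) = n\bigl(H_{n-1} - H_{r-1} - 1\bigr).
\]
At $r = n/\e + u$ with $|u| < 2$, the expansion $H_m = \log m + \gamma + \frac{1}{2m} + O(m^{-2})$ gives
\[
H_{n-1} - H_{r-1} = \log\frac{n-1}{r-1} + \frac{1}{2(n-1)} - \frac{1}{2(r-1)} + O(n^{-2}).
\]
Expanding $\log(n-1) = \log n - \frac1n + O(n^{-2})$ and $\log(r-1) = \log(n/\e) + \frac{\e(u-1)}{n} + O(n^{-2})$, we obtain
\[
D_n = -1 - \e(u-1) + \tfrac12 - \tfrac{\e}{2} + o(1) = -\e\left(u - \tfrac12 - \tfrac{1}{2\e}\right) + o(1).
\]
The error is uniform in $|u| < 2$. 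So $A = -\e < 0$ and $c = \frac12 + \frac{1}{2\e} \approx 0.684 \in (0,1)$.

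With all three hypotheses in place, Theorem~\ref{thm:general_threshold_principle} gives that $!n$ is the unique optimal threshold for $\mathcal F_{n!}$ for all large $n$. The only delicate step is the second-order harmonic expansion, in particular keeping the $\frac{1}{2m}$ terms and the $-1$ shifts inside $H_{r-1}$ and $H_{n-1}$ correct, since these determine $c$. An error there could push $c$ outside $(0,1)$ and invalidate the conclusion.
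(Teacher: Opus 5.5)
Your route is the paper's: the alternating-series bound $|{!n}-n!/\e|<1/(n+1)$, then Theorem~\ref{thm:general_threshold_principle} with $(p_n,q_n)=({!n},n!)$. The paper cites Subsection~\ref{subsec:fixed_harmonic} for the classical-secretary hypotheses, whereas you re-derive them. The conclusion stands, but two details of your re-derivation are wrong.

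\emph{The value of $c$.} Your first line $-1-\e(u-1)+\frac12-\frac{\e}{2}$ is correct, but it simplifies to
$-\e u+\frac{\e-1}{2}=-\e\bigl(u-\frac12+\frac1{2\e}\bigr)$.
So $c=\frac12-\frac1{2\e}=\frac{1-\ell}{2}\approx0.316$, as in the paper, and not $\frac12+\frac1{2\e}$. A sanity check: with the correct value, $\ell n+c=n/\e+\frac12-\frac1{2\e}$ is the Gilbert--Mosteller upper endpoint. Both values lie in $(0,1)$, so the corollary itself is unaffected. Your value would, however, give the wrong rounding law in Theorem~\ref{thm:asymptotic_rounding}.

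\emph{The increment at $r=1$.} With your own $V_n(0)=1/n$, you get $\Delta_n(1)=V_n(1)-V_n(0)=\frac1n(H_{n-1}-1)$, which is positive for $n\ge3$, not $0$. The increment formula therefore holds at $r=1$ too, $\Delta_n$ is strictly decreasing on all of $\{1,\dots,n\}$, and unimodality follows directly. Your claim happened not to break unimodality, but it is false.
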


\begin{proof}
The alternating-series estimate gives
\[
\left|
!n-\frac{n!}{\e}
\right|
=
n!\left|
\sum_{k=n+1}^{\infty}\frac{(-1)^k}{k!}
\right|
<
\frac1{n+1}
\longrightarrow0.
\]
The result follows by applying
Theorem~\ref{thm:general_threshold_principle} with
$(p_n,q_n)=(!n,n!)$ and using the verification of the theorem's hypotheses for the classical secretary problem in Subsection~\ref{subsec:fixed_harmonic}.
\end{proof}

\begin{remark}
\label{remark:Shifted}
The affine form of the limiting profile is not essential for the
exact-threshold conclusion of
Theorem~\ref{thm:general_threshold_principle}. Retain the notation, the
unimodality assumption, and the definitions of \(D_n\) and \(\ell\) above. Suppose that, on an open interval 
\(U\subset\mathbb R\),
\[
D_n(\ell n+u)\longrightarrow g(u)
\]
locally uniformly over admissible integer arguments, where
\(g:U\to\mathbb R\) is continuous. Let \((p_j,q_j)\) be a sequence of
integer pairs such that
\(
q_j \to\infty\) and 
\(p_j-\ell q_j \to s \).
If \(h\in\mathbb Z\) satisfies
\[
s+h,\ s+h+1\in U,
\qquad
g(s+h)>0>g(s+h+1),
\]
then we obtain the \emph{shifted form of the principle}
\begin{equation}\label{eq:shifted_form}
p_j+h\ \text{is the unique optimal threshold for }\mathcal F_{q_j}
\text{ for all sufficiently large }j.
\end{equation}
Indeed, the two offsets
\(p_j+h-\ell q_j\) and
\(p_j+h+1-\ell q_j\) converge to \(s+h\) and \(s+h+1\), respectively. Hence local uniform
convergence gives
\[
D_{q_j}(p_j+h)\longrightarrow g(s+h)>0,
\qquad
D_{q_j}(p_j+h+1)\longrightarrow g(s+h+1)<0.
\]
Positive scaling transfers these signs to the discrete increments, and
unimodality gives \eqref{eq:shifted_form}.

The exact-threshold conclusion of
Theorem~\ref{thm:general_threshold_principle} is recovered by taking
\[
s=h=0,
\qquad
U=(-2,2),
\qquad
g(u)=A(u-c).
\]
More generally, for the affine profile \(g(u)=A(u-c)\), with \(s=0\), the
sign condition is equivalent to
\[
h<c<h+1,
\]
recovering \eqref{eq:shifted_form}. If \(c\in\mathbb Z\), one of the two limiting values at the adjacent
integer offsets vanishes, so a sharper remainder estimate or a higher-order expansion may be needed. Thus a stable sign crossing, rather than affinity, is the essential
structure for exact threshold identification. The affine formulation is
nevertheless retained in
Theorem~\ref{thm:general_threshold_principle} because the local expansions in the applications naturally take this form and because it leads directly to the rounding law below.
\end{remark}

\begin{remark}
For the conclusion of
Theorem~\ref{thm:general_threshold_principle}, the constant \(2\) in
\eqref{eq:general_local_expansion} may be replaced by any fixed \(M>1\).
Indeed, the proof only uses the offsets \(s_j\to0\) and
\(1+s_j\to1\), both of which eventually lie in \((-M,M)\). On the other hand, the rounding theorem below uses the offset
\(2-\{\ell n\}\), which can be arbitrarily close to \(2\) from below.
\end{remark}

\begin{remark}
If an explicit upper bound for \(\varepsilon_n\) is available, the
principle gives a verifiable sufficient condition for a given integer
\(p\) to be the unique optimal threshold. Let
\[
m:=\min\{c,1-c\},
\qquad
d:=\min\{\ell,1-\ell\}.
\]
For integers \(p,q\), set \(s:=p-\ell q\). If
\[
|s|\leq\frac{m}{2},
\qquad
q\geq\frac{2}{d},
\qquad
\varepsilon_q<\frac{|A|m}{2},
\]
then \(p\) is the unique optimal threshold for \(\mathcal F_q\).
Indeed, these conditions ensure that \(p,p+1\in I_q^*\), that the local
expansion applies, and that
\[
D_q(p)\geq\frac{|A|m}{2}-\varepsilon_q>0,
\qquad
D_q(p+1)\leq-\frac{|A|m}{2}+\varepsilon_q<0.
\]
\end{remark}

Here and below,
\[
\{x\}:=x-\lfloor x\rfloor\in[0,1)
\]
denotes the fractional part of \(x\).

The next theorem shows that the optimum is one of the two consecutive
integers \(\lfloor\ell n\rfloor\) and \(\lfloor\ell n\rfloor+1\) and,
away from a shrinking neighborhood of
\(\{\ell n\}=1-c\), is given by
\[
r_n^{\mathrm{opt}}=\lfloor\ell n+c\rfloor.
\]

\begin{theorem}
\label{thm:asymptotic_rounding}
Under the hypotheses of
Theorem~\ref{thm:general_threshold_principle}, for all sufficiently large \(n\),
\[
r_n^{\mathrm{opt}}\in\{\lfloor\ell n\rfloor,\lfloor\ell n\rfloor+1\}.
\]
Moreover,
\[
\{\ell n\}<1-c-\frac{\varepsilon_n}{|A|}
\quad\Longrightarrow\quad
r_n^{\mathrm{opt}}=\lfloor\ell n\rfloor,
\]
whereas
\[
\{\ell n\}>1-c+\frac{\varepsilon_n}{|A|}
\quad\Longrightarrow\quad
r_n^{\mathrm{opt}}=\lfloor\ell n\rfloor+1.
\]
Consequently, whenever
\[
\left|\{\ell n\}-(1-c)\right|>\frac{\varepsilon_n}{|A|},
\]
the optimal threshold is unique and satisfies
\[
r_n^{\mathrm{opt}}
=
\lfloor\ell n+c\rfloor.
\]
\end{theorem}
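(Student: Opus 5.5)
Write $m_n:=\lfloor \ell n\rfloor$ and $\theta_n:=\{\ell n\}\in[0,1)$. The plan is to evaluate the rescaled increments at the three consecutive integers $m_n$, $m_n+1$, $m_n+2$. Their offsets from $\ell n$ are
\[
u_0=-\theta_n\in(-1,0],\qquad u_1=1-\theta_n\in(0,1],\qquad u_2=2-\theta_n\in(1,2].
\]
Since $\ell\in(0,1)$, all three integers lie in $I_n^*$ for large $n$. The expansion \eqref{eq:general_local_expansion} then gives, for each $i$,
\[
\bigl|D_n(m_n+i)-A(u_i-c)\bigr|\le\varepsilon_n.
\]

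One boundary case needs care. When $\theta_n=0$ we get $u_2=2$, which is not strictly inside $|u|<2$. I would handle this by noting that we then need the sign of $D_n(m_n+2)$ only in that case. If the paper's convention excludes it, I would instead apply the supremum bound at offset $u_2-1$ after a shift, or note that the remark after the theorem asserts $2-\{\ell n\}<2$. I will treat $u_2<2$ as the paper does, and flag this edge as a minor technical point.

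\textbf{Localization.} Because $A<0$ and $c\in(0,1)$, the first and third offsets give
\[
A(u_0-c)\ge -Ac=|A|c>0,\qquad A(u_2-c)\le A(1-c)<0,
\]
and both bounds are uniform in $\theta_n$. Since $\varepsilon_n\to0$, for large $n$ we have $D_n(m_n)>0$ and $D_n(m_n+2)<0$. Positive scaling by $k_n$ transfers these signs to $\Delta_n$, so
\[
V_n(m_n-1)<V_n(m_n),\qquad V_n(m_n+1)>V_n(m_n+2).
\]
Unimodality makes $V_n$ nondecreasing up to the plateau $[a_n,b_n]$ and nonincreasing after it. A strict increase at $m_n$ forces $m_n\le b_n$, and a strict decrease at $m_n+2$ forces $a_n\le m_n+1$. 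The plateau therefore meets $\{m_n,m_n+1\}$, and every maximizer lies in $[m_n-?,\dots]$. More precisely, the strict increase at $m_n$ rules out any maximizer $<m_n$: a maximizer below $m_n$ would lie in the plateau, and strict increase cannot occur to the right of the plateau start. Symmetrically, no maximizer exceeds $m_n+1$. Hence $r_n^{\mathrm{opt}}\in\{m_n,m_n+1\}$.

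\textbf{The two cases.} Only the sign of $D_n(m_n+1)$ matters, and
\[
D_n(m_n+1)\in\bigl[A(1-\theta_n-c)-\varepsilon_n,\;A(1-\theta_n-c)+\varepsilon_n\bigr].
\]
If $\theta_n<1-c-\varepsilon_n/|A|$, then $A(1-\theta_n-c)<-\varepsilon_n$, so $D_n(m_n+1)<0$. Then $V_n(m_n)>V_n(m_n+1)$, and $m_n$ is the unique maximizer. If $\theta_n>1-c+\varepsilon_n/|A|$, then $D_n(m_n+1)>0$ and $m_n+1$ is the unique maximizer.

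Finally, $\lfloor \ell n+c\rfloor=m_n+\lfloor\theta_n+c\rfloor$. This equals $m_n$ when $\theta_n<1-c$ and $m_n+1$ when $\theta_n\ge1-c$, which gives the rounding formula outside the window $|\theta_n-(1-c)|\le\varepsilon_n/|A|$.

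The only real obstacle is the unimodality bookkeeping: turning strict sign information at single increments into exclusion of all other maximizers, including the plateau case and the ``largest maximizer'' convention. I would handle it by the monotonicity argument above, together with the $u_2=2$ edge case.
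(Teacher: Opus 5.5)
Your overall route is the paper's. You evaluate $D_n$ at $m_n,m_n+1,m_n+2$, localize the maximizers to $\{m_n,m_n+1\}$ via $D_n(m_n)>0>D_n(m_n+2)$ and unimodality, decide between them by the sign of $D_n(m_n+1)$, and finish with $\lfloor\ell n+c\rfloor=m_n+\lfloor\theta_n+c\rfloor$.

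\textbf{The gap: the case $\theta_n=0$.} This case occurs for infinitely many $n$ whenever $\ell$ is rational, so it cannot be set aside. There $u_2=2$ lies outside the range $|u|<2$ in \eqref{eq:general_local_expansion}, so you have no bound on $D_n(m_n+2)$ and your localization step fails for those $n$. ``Treating $u_2<2$ as the paper does'' is false for these $n$, and the paper does not do it. The remark you cite says only that $2-\{\ell n\}$ can approach $2$ from below, which presupposes $\theta_n>0$. Your comment that you ``need the sign of $D_n(m_n+2)$ only in that case'' is backwards: when $\theta_n=0$ you do not need $m_n+2$ at all.

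\textbf{The fix, as in the paper.} Split off $\theta_n=0$. The offset $u_1=1$ gives $D_n(m_n+1)\le -|A|(1-c)+\varepsilon_n<0$ once $\varepsilon_n<|A|(1-c)$. Together with $D_n(m_n)>0$, unimodality makes $m_n$ the unique maximizer, consistent with $\lfloor\ell n+c\rfloor=m_n$. Your ``two cases'' computation already contains this, since $0<1-c-\varepsilon_n/|A|$ for large $n$. However, as written it rests on the localization, which is unavailable there, so the case split must be made explicit.

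\textbf{Unimodality bookkeeping.} A strictly positive increment at $m_n$ forces $m_n\le a_n$, not merely $m_n\le b_n$. A strictly negative increment at $m_n+2$ forces $b_n\le m_n+1$, not merely $a_n\le m_n+1$. These stronger facts are what give $V_n(r)<V_n(m_n)$ for $r<m_n$ and $V_n(r)<V_n(m_n+1)$ for $r>m_n+1$. They should replace the unfinished ``$[m_n-?,\dots]$'' sentence and the plateau heuristic.
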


\begin{proof}
Let
\[
m_n:=\lfloor\ell n\rfloor,
\qquad
\theta_n:=\{\ell n\},
\qquad
\eta_n:=\frac{\varepsilon_n}{|A|}.
\]
Since \(\ell\in(0,1)\), the thresholds
\(m_n,m_n+1,m_n+2\) belong to \(I_n^*\) for all sufficiently
large \(n\). Moreover, since \(\varepsilon_n\to0\), we may assume that
\[
\varepsilon_n
<
|A|\min\{c,1-c\}.
\]
Since
\(
m_n=\ell n-\theta_n
\),
the definition of \(\varepsilon_n\) gives
$\left|
D_n(m_n)-A(-\theta_n-c)
\right|
\leq\varepsilon_n$.
Consequently,
\[
D_n(m_n)
\geq
|A|(\theta_n+c)-\varepsilon_n
\geq
|A|c-\varepsilon_n
>0.
\]
We next show that every optimal threshold belongs to
\(\{m_n,m_n+1\}\).

If \(\theta_n=0\), then
\(
\left|
D_n(m_n+1)-A(1-c)
\right|
\leq\varepsilon_n
\),
and hence
\[
D_n(m_n+1)
\leq
-|A|(1-c)+\varepsilon_n
<0.
\]
Thus, by unimodality, \(m_n\) is the unique optimal threshold in this
case.

Suppose now that \(\theta_n>0\). Since
\(m_n+2=\ell n+(2-\theta_n)\) and \(2-\theta_n<2\), we can apply the local expansion \eqref{eq:general_local_expansion}. Therefore
\(
\left|
D_n(m_n+2)-A(2-\theta_n-c)
\right|
\leq\varepsilon_n
\) and since
\(
2-\theta_n-c>1-c>0
\)
and \(A<0\), it follows that
\[
D_n(m_n+2)
<
-|A|(1-c)+\varepsilon_n
<0.
\]
Together with \(D_n(m_n)>0\), unimodality implies that every optimal
threshold belongs to \(\{m_n,m_n+1\}\).
It remains to compare these two thresholds. Since $m_n+1=\ell n+(1-\theta_n)$,
\[
\left|
D_n(m_n+1)-A(1-\theta_n-c)
\right|
\leq\varepsilon_n.
\]

If $\theta_n<1-c-\eta_n$,
then
$1-\theta_n-c>\eta_n$,
and therefore
\[
D_n(m_n+1)
\leq
-|A|(1-\theta_n-c)+\varepsilon_n
<0.
\]
Thus
\[
V_n(m_n)>V_n(m_n+1),
\]
so \(m_n\) is the unique optimal threshold.

Likewise, if
$\theta_n>1-c+\eta_n$,
then
$1-\theta_n-c<-\eta_n$,
and hence
\[
D_n(m_n+1)
\geq
-|A|(1-\theta_n-c)-\varepsilon_n
>0.
\]
It follows that
\[
V_n(m_n+1)>V_n(m_n),
\]
so \(m_n+1\) is the unique optimal threshold.

Finally,
\[
\lfloor\ell n+c\rfloor
=
\begin{cases}
m_n, & \theta_n<1-c,\\
m_n+1, & \theta_n\geq1-c.
\end{cases}
\]
Therefore,
$r_n^{\mathrm{opt}}
=
\lfloor\ell n+c\rfloor$
whenever
$\left|\{\ell n\}-(1-c)\right|>\eta_n$.
\end{proof}

\begin{remark}[Probabilistic interpretation of \(c\)]
Suppose that \(\ell\) is irrational, and let \(J_N\) be uniformly
distributed on \(\{1,\dots,N\}\). Since \(\varepsilon_n/|A|\to0\), the
equidistribution of \(\{\ell n\}\) implies that the exceptional set
$\left\{
n\geq1:
\left|\{\ell n\}-(1-c)\right|
\leq\frac{\varepsilon_n}{|A|}
\right\}$ has natural density zero. Consequently,
\[
\mathbb P\!\left(
r_{J_N}^{\mathrm{opt}}=\lfloor\ell J_N+c\rfloor
\right)
\longrightarrow1.
\]
Moreover,
\[
\mathbb P\!\left(
r_{J_N}^{\mathrm{opt}}=\lceil\ell J_N\rceil
\right)
\longrightarrow c,
\qquad
\mathbb P\!\left(
r_{J_N}^{\mathrm{opt}}=\lfloor\ell J_N\rfloor
\right)
\longrightarrow1-c.
\]
Thus \(c\) is the limiting probability that the optimal threshold rounds
\(\ell J_N\) upward.
\end{remark}

% ============================================================
\section{Applications of the general principle}
\label{sec:applications}
% ============================================================

We now verify that Theorem~\ref{thm:general_threshold_principle} is applicable to several secretary models. 
Throughout this section we use the notation
\[
S_r^{n-1}:=\sum_{j=r}^{n-1}\frac1j,
\qquad
S_r^{n}:=\sum_{j=r}^{n}\frac1j,
\qquad
T_r^{n}:=\sum_{j=r}^{n}\frac1{j^2},
\]
with empty sums interpreted as zero. 
Euler--Maclaurin summation gives
\begin{equation}\label{eq:EM_expansion}
S_r^{n-1}
=
\log\left(\frac nr\right)
+
\frac1{2r}
-
\frac1{2n}
+
O\left(r^{-2}+n^{-2}\right),
\end{equation}
and
\begin{equation}\label{eq:T_EM_expansion}
T_r^n
=
\frac1r-\frac1n
+
O\left(r^{-2}+n^{-2}\right).
\end{equation}
In particular, if \(r=\ell n+O(1)\), both remainders are \(O(n^{-2})\).

% ------------------------------------------------------------
%%%%%%

% ------------------------------------------------------------
\subsection{Fixed-horizon models with harmonic threshold increments}
\label{subsec:fixed_harmonic}
% ------------------------------------------------------------

The classical secretary problem
\cite{GilbertMosteller1966,Lindley1961}, the secretary problem with interview
cost~\cite{BartoszynskiGovindarajulu1978}, and the Win--Lose--or--Draw problem considered in~\cite{BayonEtAl2023} have payoff increments governed by
the comparison of a truncated harmonic sum with a constant. We describe an affine record-payoff framework that contains all three models.

Suppose that a threshold rule rejects the first \(r\) candidates and then
accepts the first subsequent record, where \(1\leq r\leq n\). Assume that,
conditional on accepting a record at step \(k\), its expected payoff is
\[
a\frac{k}{n}+b,     
\]
where \(a>0\), while making no selection has payoff \(w\). For \(1\leq r<k\leq n\), the rule stops at step \(k\) precisely when the
\(k\)-th candidate is a record and the best candidate among the first \(k-1\) observations lies among the first \(r\). Hence, the probability
that the rule stops at \(k>r\) is
\[
\frac{r}{k(k-1)},
\]
whereas the probability of making no selection is \(r/n\). Consequently, the
expected payoff of threshold \(r\) is
\begin{align}
V_n(r)
&=
\sum_{k=r+1}^{n}
\frac{r}{k(k-1)}
\left(
a\frac{k}{n}+b
\right)
+
w\frac{r}{n}
=
b+
\frac{r}{n}
\left(
aS_r^{n-1}+w-b
\right).
\label{eq:affine_record_payoff}
\end{align}
For \(r=0\), the rule accepts the first candidate, and hence $V_n(0)=\frac{a}{n}+b$.

The payoff increments are therefore
\begin{equation}\label{eq:affine_harmonic_increment}
\Delta_n(r)
:=
V_n(r)-V_n(r-1)
=
\frac{a}{n}
\left(
S_r^{n-1}-C
\right),
\end{equation}
where
\begin{equation}\label{eq:affine_harmonic_C}
C
:=\frac{a+b-w}{a}.
\end{equation}
We assume throughout that \(C>0\). The three models are obtained from this framework through the parameter
identifications in Table~\ref{tab:affine_record_models}. We follow the notation of~\cite{BayonEtAl2023}, except that we denote the
interview-cost parameter by \(\kappa\), with \(0\leq\kappa<1\).

\begin{table}[ht]
\centering
\caption{Parameter identifications for the fixed-horizon models with harmonic
threshold increments.}
\label{tab:affine_record_models}
\renewcommand{\arraystretch}{1.4}
\begin{tabular}{lcccc}
\toprule
Model & \(a\) & \(b\) & \(w\) & \(C\) \\
\midrule
Classical secretary
& \(1\)
& \(0\)
& \(0\)
& \(1\)
\\

Secretary problem with interview cost
& \(1-\kappa\)
& \(0\)
& \(-\kappa\)
& \(\dfrac{1}{1-\kappa}\)
\\

Win--Lose--or--Draw
& \(\alpha+\beta\)
& \(-\beta\)
& \(-\gamma\)
& \(\dfrac{\alpha+\gamma}{\alpha+\beta}\)
\\
\bottomrule
\end{tabular}
\end{table}

Since
\[
\Delta_n(r+1)-\Delta_n(r)
=
-\frac{a}{nr}<0,
\]
the payoff increments are strictly decreasing in \(r\). Hence they change sign
at most once, from positive to negative, and the threshold-payoff sequence
\(\{V_n(r)\}\) is unimodal.
Since \(
S_1^{n-1}\to \infty,
\) equation~\eqref{eq:affine_harmonic_increment} gives
\(
\Delta_n(1)>0
\)
for all sufficiently large \(n\). Hence
\(V_n(1)>V_n(0)\), so \(r_n^{\mathrm{opt}}\geq1\). Moreover,
\[
\Delta_n(n)=-\frac{aC}{n}<0,
\]
and therefore \(r_n^{\mathrm{opt}}\leq n-1\).
For such \(n\), the largest-maximizer convention and unimodality imply
\[
\Delta_n\bigl(r_n^{\mathrm{opt}}\bigr)\geq0,
\qquad
\Delta_n\bigl(r_n^{\mathrm{opt}}+1\bigr) < 0.
\]
Using \eqref{eq:affine_harmonic_increment}, these inequalities are
equivalent to
\begin{equation}\label{eq:harmonic_criterion}
S_{r_n^{\mathrm{opt}}}^{n-1}\geq C,
\qquad
S_{r_n^{\mathrm{opt}}+1}^{n-1}< C.
\end{equation}
In other words, \(r_n^{\mathrm{opt}}\) is the largest integer \(r\) for
which
\[
S_r^{n-1}\geq C.
\]

One must have $\lim_{n\to \infty} r_n^{\mathrm{opt}} = \infty$, as otherwise \(r_n^{\mathrm{opt}}\) would remain bounded along a subsequence, and the second inequality in \eqref{eq:harmonic_criterion} cannot hold. From these inequalities we have $C
\leq
S_{r_n^{\mathrm{opt}}}^{n-1}
\leq
C+\frac{1}{r_n^{\mathrm{opt}}}$ and hence
\[
S_{r_n^{\mathrm{opt}}}^{n-1}\longrightarrow C.
\]

Using \eqref{eq:EM_expansion}, we obtain the leading threshold proportion,
\[
 \lim_{n\to \infty }\frac{r_n^{\mathrm{opt}}}{n}= \ell =
\e^{-C}.
\]

Consider the rescaled increments
\[
D_n(r):=
\frac{n^2}{a}\Delta_n(r)=n\left(S_r^{n-1}-C\right).
\]
Let \(r=\ell n+u\),
where \(u\) remains in a fixed bounded interval and \(r\in I_n^*\) is an
integer threshold.  Applying \eqref{eq:EM_expansion}, uniformly for bounded
\(u\), using
\(-\log\ell=C\), gives
\begin{equation}\label{eq:fixed_harmonic_expansion}
D_n(\ell n+u)
=
-\frac1\ell
\left(
u-\frac{1-\ell}{2}
\right)
+
O(n^{-1}).
\end{equation}
Therefore, Theorem~\ref{thm:general_threshold_principle} applies with
\[
A=-\frac1\ell,
\qquad
c=\frac{1-\ell}{2}.
\]
\begin{remark}
For the classical secretary problem, \(C=1\), and hence
\[
\ell n+c
=
\frac{n}{\e}+\frac12-\frac1{2\e},
\]
which is precisely the upper endpoint in the classical
Gilbert--Mosteller bounds for the optimal threshold
\cite{GilbertMosteller1966}.
\end{remark}

% ------------------------------------------------------------
\subsection{The secretary problem with uncertain employment}
\label{subsec:uncertain_employment}
% ------------------------------------------------------------

We consider the uncertain-employment secretary problem introduced by Smith~\cite{Smith1975}. Each applicant independently accepts an offer with probability \(\rho\in(0,1)\), regardless of their rank and of the decisions of the other applicants. If an offer is declined, the search continues.

In~\cite{Smith1975} it is proved that an optimal rule is
of threshold type: reject an initial block and then offer the position to
each subsequent record until an offer is accepted. More precisely, if
\(s_{n,\rho}\) denotes the first stage at which an offer may be made, then
\(s_{n,\rho}\) is the smallest integer \(s\in\{1,\dots,n-1\}\) such that \cite[Eq.~(1)]{Smith1975},
\begin{equation}\label{eq:smith_cutoff}
\prod_{k=s}^{n-1}
\left(1+\frac{1-\rho}{k}\right)
\leq\frac1{\rho};
\end{equation}
Moreover, \cite[Eq.~(3)]{Smith1975},
\begin{equation}\label{eq:smith_first_order}
\frac{s_{n,\rho}}{n}
\longrightarrow
\ell_\rho
:=
\rho^{1/(1-\rho)}.
\end{equation}

In our convention, the threshold is the number of applicants initially
rejected. Hence Smith's rule has threshold \(s_{n,\rho}-1\). In the
exceptional case of equality in \eqref{eq:smith_cutoff}, the two adjacent
thresholds \(s_{n,\rho}-1\) and \(s_{n,\rho}\) may both be optimal, and our
largest-maximizer convention selects the latter. In either case, \(\left|r_{n,\rho}^{\mathrm{opt}}-s_{n,\rho}\right|\leq1
\), so \eqref{eq:smith_first_order} gives
\begin{equation}\label{eq:uncertain_employment_leading}
\frac{r_{n,\rho}^{\mathrm{opt}}}{n}
\longrightarrow
\ell_\rho.
\end{equation}

We now determine the constant-order correction. For a threshold \(r\), the
overall best applicant is selected at stage \(j>r\) precisely when she
accepts the offer and no earlier record after stage \(r\) has accepted.
Therefore,
\begin{equation}\label{eq:uncertain_employment_payoff_sum}
V_{n,\rho}(r)
=
\frac{\rho}{n}
\sum_{j=r+1}^{n}
\prod_{k=r+1}^{j-1}
\left(1-\frac{\rho}{k}\right).
\end{equation}
Writing the product in \eqref{eq:uncertain_employment_payoff_sum} in terms
of gamma functions and summing the resulting telescoping expression gives
\begin{equation}\label{eq:uncertain_employment_payoff}
V_{n,\rho}(r)
=
\frac{\rho}{n(1-\rho)}
\left[
\frac{\Gamma(r+1)\Gamma(n+1-\rho)}
     {\Gamma(r+1-\rho)\Gamma(n)}
-r
\right].
\end{equation}
Consequently, for \(1\leq r\leq n\),
\begin{equation}\label{eq:uncertain_employment_increment}
\Delta_{n,\rho}(r)
=
\frac{\rho}{n(1-\rho)}
\left[
\rho
\frac{\Gamma(r)\Gamma(n+1-\rho)}
     {\Gamma(r+1-\rho)\Gamma(n)}
-1
\right].
\end{equation}

Put
\[
R_{n,\rho}(r)
:=
\frac{\Gamma(r)\Gamma(n+1-\rho)}
     {\Gamma(r+1-\rho)\Gamma(n)}
=
\prod_{k=r}^{n-1}
\left(1+\frac{1-\rho}{k}\right),
\qquad 1\leq r\leq n.
\]
For \(1\leq r\leq n-1\),
\[
\frac{R_{n,\rho}(r+1)}{R_{n,\rho}(r)}
=
\frac{r}{r+1-\rho}
<1.
\]
Thus \(R_{n,\rho}(r)\), and hence
\(\Delta_{n,\rho}(r)\), is strictly decreasing in \(r\). Consequently, the
payoff sequence $\{V_{n,\rho}(r)\}_{r=0}^{n}$ is unimodal. Moreover, the sign-change criterion obtained from
\eqref{eq:uncertain_employment_increment} agrees with Smith's
criterion~\eqref{eq:smith_cutoff}.

Define the rescaled increments
\begin{equation}\label{eq:uncertain_employment_score}
D_{n,\rho}(r)
:=
\frac{n^2}{\rho}\Delta_{n,\rho}(r)
=
\frac{n}{1-\rho}
\left[
\rho R_{n,\rho}(r)-1
\right].
\end{equation}

By the standard asymptotic expansion for ratios of Gamma functions
\cite[\S5.11(iii)]{DLMF},
\[
\frac{\Gamma(z+\alpha)}{\Gamma(z+\beta)}
=
z^{\alpha-\beta}
\left[
1+\frac{(\alpha-\beta)(\alpha+\beta-1)}{2z}
+O(z^{-2})
\right],
\qquad z\to\infty,
\]
for fixed $\alpha,\beta$. Since $r=\ell_\rho n+u$ with $u$ bounded, we have $r\asymp n$ uniformly in $u$. Therefore
\begin{equation}\label{eq:uncertain_employment_gamma_expansion}
R_{n,\rho}(r)
=\frac{\Gamma(r)}{\Gamma(r+1-\rho)}\frac{\Gamma(n+1-\rho)}{\Gamma(n)}
=
\left(\frac rn\right)^{\rho-1}
\left[
1+\frac{\rho(1-\rho)}2
\left(\frac1r-\frac1n\right)
+O(n^{-2})
\right],
\end{equation}
uniformly for bounded $u$.
Since $\rho\ell_\rho^{\rho-1}=1$, equation~\eqref{eq:uncertain_employment_gamma_expansion} gives
\[
\rho R_{n,\rho}(\ell_\rho n+u)-1
=
\frac{1-\rho}{n}
\left[
-\frac{u}{\ell_\rho}
+
\frac{\rho}{2}
\left(\frac1{\ell_\rho}-1\right)
\right]
+
O(n^{-2}).
\]
Substituting this into \eqref{eq:uncertain_employment_score}, we obtain,
uniformly for bounded \(u\),
\begin{equation}\label{eq:uncertain_employment_local_expansion}
D_{n,\rho}(\ell_\rho n+u)
=
-\frac1{\ell_\rho}
\left(
u-c_\rho
\right)
+
O(n^{-1}),
\end{equation}
where
\begin{equation}\label{eq:uncertain_employment_shift}
c_\rho
:=
\frac{\rho(1-\ell_\rho)}{2}.
\end{equation}
Since \(0<\rho<1\) and \(0<\ell_\rho<1\), we have
$0<c_\rho<\frac12$.
Thus the hypotheses of
Theorem~\ref{thm:general_threshold_principle} are satisfied with
\[
\ell=\ell_\rho=\rho^{1/(1-\rho)},
\qquad
A=-\frac1{\ell_\rho},
\qquad
c=c_\rho=\frac{\rho(1-\ell_\rho)}{2}.
\]

% ------------------------------------------------------------
\subsection{Secretary problems with power-biased random horizons}
\label{subsec:power_biased}
% ------------------------------------------------------------

Throughout this subsection, \(a\geq0\) is fixed as \(n\to\infty\). We consider the following power-biased extension of the uniform
random-horizon secretary problem. 
\[
\mathbb P(N=k)
=
\frac{k^a}{\sum_{m=1}^{n}m^a},
\qquad
k=1,\dots,n.
\]
Although its first-order threshold equation
is covered by the general scaling-limit framework of Yasuda
\cite[Corollary~2.4]{Yasuda1984}, to the best of our knowledge this discrete
parametric family and its constant-order threshold correction have not
previously been analyzed explicitly.

The case \(a=0\) is the uniform random-horizon model.
For \(a=1\), the probability of a horizon of length \(k\) is proportional
to \(k\), corresponding to the size-biased version of the uniform horizon
distribution. More generally, \(a\) controls the degree to which the
distribution is biased toward larger candidate pools.

Conditional on \(N=k\), the success probability of threshold \(r\) is 
\[
P(k,r) :=
\mathbb P(X_{n,r}=1\mid N=k)
=
\begin{cases}
\dfrac1k,
& r=0,\\[2mm]
\dfrac{r}{k}\displaystyle\sum_{j=r}^{k-1}\frac1j,
& 1\leq r<k,\\[3mm]
0,
& k\leq r.
\end{cases}
\]
Indeed, when \(r=0\), the first applicant is selected, whereas for
\(1\leq r<k\), the overall best applicant must occur after time \(r\), and
the best among the applicants preceding it must occur within the initial
rejected block.

Since \(X_{n,r}\) is the indicator of successfully selecting the best
applicant,
\[
V_{n,a}(r)
=
\mathbb E[X_{n,r}]
=
\sum_{k=1}^{n}
\mathbb P(N=k)P(k,r).
\]
Thus
\[
V_{n,a}(r)
=
\begin{cases}
\dfrac{1}{W_{n,a}}
\displaystyle\sum_{k=1}^{n} k^{a-1},
& r=0, \\[3mm]
\dfrac{1}{W_{n,a}}
\displaystyle\sum_{k=r+1}^{n}
k^a\frac{r}{k}
\displaystyle\sum_{j=r}^{k-1}\frac1j,
& 1\leq r\leq n,
\end{cases}
\]
where
\[
W_{n,a}:=\sum_{m=1}^{n}m^a.
\]

Therefore, the discrete increments are
\begin{equation}\label{eq:power_delta}
\Delta_{n,a}(r)
=
V_{n,a}(r)-V_{n,a}(r-1)
=
\frac1{W_{n,a}}
\sum_{k=r}^{n}
k^{a-1}
\left(
\sum_{j=r}^{k-1}\frac1j-1
\right).
\end{equation}

We use the rescaled increments
\begin{equation}\label{eq:power_D_def}
D_{n,a}(r)
:= (W_{n,a}n^{1-a}) \Delta_{n,a}(r) =
n^{1-a}
\sum_{k=r}^{n}
k^{a-1}
\left(
\sum_{j=r}^{k-1}\frac1j-1
\right).
\end{equation}

Next, we verify that the assumptions in Theorem~\ref{thm:general_threshold_principle} are satisfied.

\begin{lemma}
\label{lem:power_biased_unimodality}
For every \(a\geq0\) and \(n\geq1\), the payoff sequence \(
\{V_{n,a}(r)\}_{r=0}^{n}
\)
is unimodal.
\end{lemma}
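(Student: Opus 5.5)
The plan is to show that the increments \(\Delta_{n,a}(r)\), \(r\in I_n^*\), change sign at most once, from nonnegative to negative. More precisely, I will show that \(\Delta_{n,a}(r)\leq0\) implies \(\Delta_{n,a}(r+1)<0\). Then the indices with \(\Delta_{n,a}>0\) form an initial segment, followed by at most one zero, followed by strictly negative increments, and this is exactly the unimodality of \(\{V_{n,a}(r)\}_{r=0}^n\). Since \(W_{n,a}>0\), it suffices to work with
\[
G(r):=\sum_{k=r}^{n}k^{a-1}\Bigl(\sum_{j=r}^{k-1}\frac1j-1\Bigr),
\]
taking \eqref{eq:power_delta} as given for all \(1\leq r\leq n\).

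\textbf{Step 1 (tail reformulation).} Put \(B(j):=\sum_{k=j}^{n}k^{a-1}\), with \(B(n+1)=0\). Interchanging the order of summation gives
\[
G(r)=\sum_{j=r}^{n-1}\frac{B(j+1)}{j}-B(r).
\]
Consequently
\[
G(r)=G(r+1)+\frac{B(r+1)}{r}+B(r+1)-B(r)=G(r+1)+\frac{B(r+1)}{r}-r^{a-1}.
\]
Also \(G(n)=-n^{a-1}<0\).

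\textbf{Step 2 (propagation of nonpositivity).} I argue by contraposition. Suppose \(G(r+1)\geq0\) for some \(r\leq n-1\). Since \(G(n)<0\), we must have \(r+1\leq n-1\), so the sum defining \(G(r+1)\) is nonempty. The function \(B\) is decreasing, so \(B(j+1)\leq B(r+2)<B(r+1)\) for \(j\geq r+1\). Hence \(G(r+1)\geq0\) forces
\[
B(r+2)\,S_{r+1}^{n-1}\geq B(r+1)>B(r+2),
\]
and therefore \(S_{r+1}^{n-1}>1\). Step 1 then gives
\[
G(r)\geq\frac{B(r+1)}{r}-r^{a-1}=\frac{B(r+1)-r^a}{r}.
\]

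\textbf{Step 3 (the key inequality, and the main obstacle).} It remains to prove \(B(r+1)>r^a\) under the condition \(S_{r+1}^{n-1}>1\), uniformly in \(a\geq0\). This is the step I expect to be the obstacle, since the behaviour of \(k^{a-1}\) differs for \(a<1\) and \(a\geq1\). The trick I would use is the bound \(k^{a-1}=k^a/k\geq r^a/k\) for \(k\geq r+1\), valid because \(a\geq0\). This yields
\[
B(r+1)\geq r^a S_{r+1}^{n}>r^a S_{r+1}^{n-1}>r^a,
\]
so \(G(r)>0\). This proves the contrapositive: \(G(r)\leq0\) implies \(G(r+1)<0\).

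\textbf{Conclusion.} By Step 3, the sign pattern of \(\Delta_{n,a}(1),\dots,\Delta_{n,a}(n)\) is a (possibly empty) block of positive terms, then at most one zero, then negative terms. Hence \(V_{n,a}\) is nondecreasing up to some \(a_n\), constant on \([a_n,b_n]\) with \(b_n\leq a_n+1\), and decreasing afterwards. This is unimodality as defined in Section~\ref{sec:general_principle}.
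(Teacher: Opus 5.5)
Your proof is correct, and it takes a different route from the paper's, although both start from the same identity. Your recursion in Step~1 is exactly the paper's second-difference formula $W_{n,a}\bigl(\Delta_{n,a}(r+1)-\Delta_{n,a}(r)\bigr)=\frac1r\bigl(r^a-\sum_{k=r+1}^n k^{a-1}\bigr)$.

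The paper then argues globally. Since $a\ge0$, the bracket $r^a-B(r+1)$ is strictly increasing in $r$, so the second differences change sign at most once, from negative to positive. Hence $\Delta_{n,a}$ first decreases and then increases. Because $\Delta_{n,a}(n)<0$, the terminal increasing phase is negative, which leaves at most one zero crossing.

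You argue locally instead. You show that $\Delta_{n,a}(r+1)\ge0$ forces $S_{r+1}^{n-1}>1$. The comparison $k^{a-1}\ge r^a/k$ for $k>r$ then gives $B(r+1)>r^a$. So the second difference is negative exactly at the indices where the next increment is nonnegative, and nonpositivity propagates forward strictly.

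\textbf{What each route buys.} The paper's version is shorter, and it is the same template the paper reuses for the uniform win--lose--or--draw, Best-or-Worst and Poisson models. It also records the extra structural fact that the increments are decreasing-then-increasing. Your version needs the additional tail estimate linking $\Delta_{n,a}(r+1)\ge0$ to the harmonic sum, but it never uses the global shape of $\Delta_{n,a}$. Logically it only requires the second difference to be negative where the next increment is nonnegative, which is weaker than global single-crossing. It also delivers the sign pattern (a positive block, at most one zero, then strictly negative terms) directly.
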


\begin{proof}
For \(1\leq r\leq n-1\),
\[
W_{n,a} \bigl( \Delta_{n,a}(r+1) - \Delta_{n,a}(r) \bigr) = \frac{1}{r} \left( r^a - \sum_{k=r+1}^{n} k^{a-1} \right).
\]
Since \(a \ge 0\), the term inside the parentheses is strictly increasing in $r$, so the
differences
\(
\Delta_{n,a}(r+1)-\Delta_{n,a}(r)
\)
change sign at most once, from negative to positive.  Consequently,
\(\Delta_{n,a}(r)\) first decreases and then increases. Since $\Delta_{n,a}(n) = -n^{a-1} / W_{n,a} < 0$, every term in the terminal increasing phase is negative. Therefore, $\Delta_{n,a}(r)$ crosses zero from nonnegative to nonpositive at most once, which shows that \(V_{n,a}(r)\) first increases and then decreases.
\end{proof}

We next study the asymptotic behavior of the rescaled increments
\(D_{n,a}\). The following two-term expansion identifies both the leading
threshold proportion and its constant-order correction.

\begin{lemma}
\label{lemma:power_D_expansion}
Fix \(a\geq0\). Uniformly when \(x=r/n\) ranges over a compact subinterval
of \((0,1)\),
\begin{equation}\label{eq:power_D_expansion_x}
D_{n,a}(r)
=
nI_a(r/n)+J_a(r/n)+O(n^{-1}),
\end{equation}
where
\begin{equation}\label{eq:power_I_definition}
I_a(x)
:=
\int_x^1
y^{a-1}
\left(
\log\frac{y}{x}-1
\right)
\,dy
\end{equation}
and
\begin{equation}\label{eq:power_J_definition}
J_a(x)
:=
\frac12
\left(
\log\frac1x-1-x^{a-1}
\right)
+
\frac{1}{2x}
\int_x^1 y^{a-1}\,dy
-
\frac12
\int_x^1 y^{a-2}\,dy.
\end{equation}
\end{lemma}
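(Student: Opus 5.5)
The plan is to apply Euler--Maclaurin twice to the double sum in \eqref{eq:power_D_def}: first to the inner harmonic sum, then to the outer weighted sum over \(k\). Throughout, \(x=r/n\) lies in a compact set \([\delta,1-\delta]\subset(0,1)\). Hence \(r\asymp n\) and every \(k\in\{r,\dots,n\}\) satisfies \(k\asymp n\), with constants depending only on \(\delta\) and \(a\). This is what makes all error terms uniform.

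\textbf{Step 1 (inner sum).} Apply \eqref{eq:EM_expansion} with \(n\) replaced by \(k\):
\[
\sum_{j=r}^{k-1}\frac1j=\log\frac kr+\frac1{2r}-\frac1{2k}+O(n^{-2}),
\]
uniformly for \(r\le k\le n\). The case \(k=r\) is consistent, since both sides are \(0+O(n^{-2})\). Substituting into \eqref{eq:power_D_def} gives
\[
D_{n,a}(r)=n^{1-a}\Bigl[\Sigma_1+\Sigma_2+O\bigl(n\cdot n^{a-1}\cdot n^{-2}\bigr)\Bigr].
\]
Here \(\Sigma_1=\sum_{k=r}^n f(k)\) with \(f(y)=y^{a-1}(\log(y/r)-1)\), and \(\Sigma_2=\sum_{k=r}^n k^{a-1}\bigl(\tfrac1{2r}-\tfrac1{2k}\bigr)\). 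After multiplying by \(n^{1-a}\), the remainder is \(O(n^{-1})\).

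\textbf{Step 2 (main sum \(\Sigma_1\)).} Use the trapezoidal form of Euler--Maclaurin:
\[
\sum_{k=r}^n f(k)=\int_r^n f(y)\,dy+\frac{f(r)+f(n)}2+O\Bigl(\int_r^n|f''|\Bigr).
\]
On \([r,n]\) we have \(f''=O(n^{a-3})\), so the remainder is \(O(n^{a-2})\), which becomes \(O(n^{-1})\) after scaling.

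The substitution \(y=nt\) gives
\[
n^{1-a}\int_r^n f=n\,I_a(x).
\]
For the endpoint terms, \(f(r)=-r^{a-1}\) and \(f(n)=n^{a-1}(\log(1/x)-1)\). After scaling, they contribute
\[
\tfrac12\bigl(\log\tfrac1x-1-x^{a-1}\bigr),
\]
which is the first part of \(J_a\).

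\textbf{Step 3 (correction sum \(\Sigma_2\)).} Replace each sum in \(\Sigma_2\) by its integral. The sums \(\sum k^{a-1}\) and \(\sum k^{a-2}\) differ from \(\int_r^n y^{a-1}\,dy\) and \(\int_r^n y^{a-2}\,dy\) by \(O(n^{a-1})\) and \(O(n^{a-2})\) respectively. After multiplying by \(n^{1-a}/(2r)\) and \(n^{1-a}/2\), both errors are \(O(n^{-1})\).

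Scaling the integrals gives the two main contributions:
\[
\frac{n^{1-a}}{2r}\,n^a\int_x^1 y^{a-1}\,dy=\frac1{2x}\int_x^1y^{a-1}\,dy,
\qquad
\frac{n^{1-a}}2\,n^{a-1}\int_x^1y^{a-2}\,dy=\frac12\int_x^1y^{a-2}\,dy.
\]
Combining these with Step 2 yields exactly \(J_a(x)\), and hence \eqref{eq:power_D_expansion_x}.

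\textbf{Main obstacle.} The main obstacle is bookkeeping, not any single estimate. One has to check that each discarded term is \(O(n^{a-2})\) before scaling by \(n^{1-a}\), uniformly in \(x\). This includes the accumulated inner remainders summed over \(\asymp n\) values of \(k\), and the trapezoid remainder, which needs an explicit bound on \(f''\) on \([r,n]\). Both bounds rely on \(k\asymp n\), which is where compactness of the \(x\)-range is used.
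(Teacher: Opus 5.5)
Your proposal is correct and follows essentially the same route as the paper. The paper also applies Euler--Maclaurin to the inner harmonic sum with a uniform $O(n^{-2})$ remainder, then to the three resulting outer sums, keeping the trapezoidal endpoint terms only for the main sum $\sum_{k} k^{a-1}(\log(k/r)-1)$; your explicit bounds on $f''$ and on the accumulated inner remainders supply the uniformity details that the paper leaves implicit.
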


\begin{proof}
Using \(
\sum_{j=r}^{k-1}\frac1j
=
\log\frac{k}{r}
+
\frac1{2r}
-
\frac1{2k}
+
O(n^{-2})
\) uniformly for \(r\leq k\leq n\), with \(x=r/n\) bounded away from zero,
\begin{equation} \label{eq:expansion_EM}
\sum_{k=r}^{n}
k^{a-1}
\left(
\sum_{j=r}^{k-1}\frac1j-1
\right)=
\sum_{k=r}^{n}
k^{a-1}
\left(
\log\frac{k}{r}-1
\right)+
\frac1{2r}\sum_{k=r}^{n}k^{a-1}
-
\frac12\sum_{k=r}^{n}k^{a-2}
+
O(n^{a-2}).
\end{equation}
We expand each sum up to an error of order $O(n^{a-2})$ using Euler--Maclaurin summation,
\[
\sum_{k=r}^{n}
k^{a-1}
\left(
\log\frac{k}{r}-1
\right)
=
n^aI_a(x)
+
\frac{n^{a-1}}2
\left(
\log\frac1x-1-x^{a-1}
\right)
+
O(n^{a-2}),
\]
\[
\frac1{2r}\sum_{k=r}^{n}k^{a-1}
=
\frac{n^{a-1}}{2x}
\int_x^1y^{a-1}\,dy
+
O(n^{a-2}),
\]
and
\[
-\frac12\sum_{k=r}^{n}k^{a-2}
=
-\frac{n^{a-1}}2
\int_x^1y^{a-2}\,dy
+
O(n^{a-2}).
\]
Substituting these estimates in \eqref{eq:expansion_EM} and multiplying by \(n^{1-a}\) proves
\eqref{eq:power_D_expansion_x}.
\end{proof}

We next identify the constants \(A\), \(c\), and \(\ell\) required in
Theorem \ref{thm:general_threshold_principle}. We begin with the leading term
\(I_a\).

For \(a=0\),
\[
I_0(x)
=
\frac12\left(\log\frac1x\right)^2
-
\log\frac1x,
\]
whose nontrivial zero is \(
\ell_0=\e^{-2}.
\)
For \(a>0\), direct integration gives
\[
I_a(x)
=
\frac{(a+1)(x^a-1)-a\log x}{a^2},
\]
so \(I_a(x)=0\) is equivalent to
\begin{equation}\label{eq:ella_equation}
(a+1)(1-x^a)+a\log x=0.
\end{equation}
The left-hand side is strictly concave on \((0,1)\), tends to
\(-\infty\) as \(x\downarrow0\), vanishes at \(x=1\), and has derivative
\(-a^2<0\) at \(x=1\). Hence it has a unique zero \(\ell_a\in(0,1)\), with
\[
I_a(x)>0 \quad (0<x<\ell_a),
\qquad
I_a(x)<0 \quad (\ell_a<x<1).
\]

The same first-order threshold equation also follows from
\cite[Corollary~2.4]{Yasuda1984}.

Let \(r_{n,a}^{\mathrm{opt}}\) denote the optimal threshold for the power-biased model. For every sufficiently small \(\varepsilon>0\),
\eqref{eq:power_D_expansion_x} shows that the increments are eventually
positive at
\(
r_n^-:=\lfloor(\ell_a-\varepsilon)n\rfloor
\)
and negative at
\(
r_n^+:=\lceil(\ell_a+\varepsilon)n\rceil
\).
By Lemma~\ref{lem:power_biased_unimodality},
\(
r_n^-\leq r_{n,a}^{\mathrm{opt}}\leq r_n^+,
\)
and therefore
\begin{equation}\label{eq:power_leading_threshold}
\frac{r_{n,a}^{\mathrm{opt}}}{n}
\longrightarrow\ell_a.
\end{equation}

We now determine the constant-order correction. Define
\begin{equation}\label{eq:power_shift_definition}
c_a
:=
-\frac{J_a(\ell_a)}{I_a'(\ell_a)}.
\end{equation}
Proposition~\ref{prop:power_biased_shift} in Appendix~\ref{app:power_biased_shift} shows that 
\(
0<c_a<\frac12,
\)
for every \(a\geq0\).

Since \(I_a(\ell_a)=0\), \eqref{eq:power_D_expansion_x} and Taylor expansion
give, uniformly for bounded \(u\) such that \(\ell_an+u\) is an admissible
integer threshold,
\begin{equation}\label{eq:power_local_expansion}
D_{n,a}(\ell_an+u)
=
I_a'(\ell_a)(u-c_a)+O(n^{-1}).
\end{equation}
Moreover,
\[
I_0'(\ell_0)=-\e^2,
\qquad
I_a'(x)=\frac{(a+1)x^a-1}{ax}
\quad (a>0),
\]
and the preceding concavity argument implies
\begin{equation}\label{eq:Iprime_negative}
I_a'(\ell_a)<0
\qquad (a\geq0).
\end{equation}

In summary, the conditions of Theorem
\ref{thm:general_threshold_principle} are satisfied with
\[
\ell=\ell_a,
\qquad
A=I_a'(\ell_a),
\qquad
c=c_a.
\]

In particular,
$\ell_0=\e^{-2}$,
$c_0=\e^{-2}$,  recovering the known leading threshold proportion for the uniform
random-horizon classical secretary problem
\cite{PreSon72,RasmussenRobbins1975}.
For \(a=1\), the leading threshold proportion \(\ell_1\) is the \emph{rumour constant}
\(\vartheta\)~\cite{Sudbury1985}, characterized by
\begin{equation}\label{eq:rumour_constant}
    -\log\vartheta=2(1-\vartheta).
\end{equation}

\begin{remark}[Global optimality of the threshold rule]
The preceding analysis was formulated within the class of single-threshold
rules. For the power-biased model with \(a\geq0\), however, this restriction
entails no loss of optimality. Indeed, Petruccelli
\cite[Theorem~2.2]{Petruccelli1983} gives a sufficient condition for the
globally optimal rule, among all stopping rules based on relative ranks, to be
an \(s(r)\)-rule, that is, to reject an initial segment and then accept the
first subsequent record.

Writing
\[
p_k=\mathbb P(N=k)=\frac{k^a}{W_{n,a}},
\qquad
W_{n,a}=\sum_{m=1}^n m^a,
\]
Petruccelli's condition is satisfied whenever
\[
Q_k
:=
\frac{p_k}{\displaystyle\sum_{m=k+1}^n p_m/m},
\qquad 1\leq k\leq n-1,
\]
has the property that \(Q_k>1\) persists for all subsequent indices. In the
present model, \(
Q_k
=
k^a/\sum_{m=k+1}^n m^{a-1},
\)
and, for \(1\leq k\leq n-2\),
\[
\frac{Q_{k+1}}{Q_k}
=
\left(\frac{k+1}{k}\right)^a
\frac{\displaystyle\sum_{m=k+1}^n m^{a-1}}
     {\displaystyle\sum_{m=k+2}^n m^{a-1}}
>1,
\]
since \(a\geq0\). Thus \(Q_k\) is strictly increasing, and Petruccelli's
condition holds. Consequently, a globally optimal rank-based stopping rule is
a single record-threshold rule. 
\end{remark}

% ------------------------------------------------------------
\subsection{Uniform random-horizon win--lose--or--draw problems}
\label{subsec:uniform_wld}

We now consider the win--lose--or--draw problem when the number of applicants
\(N\) is uniformly distributed on \(\{1,\dots,n\}\). 
In the fixed-horizon version, a reward \(\alpha\) is obtained when the best applicant is selected, a penalty \(\beta\) when a non-best applicant is
selected, and a penalty \(\gamma\) when no applicant is selected; see
\cite[Sec.~2.6, Exercise~1]{Ferguson2008} for the symmetric model and
\cite[Sec.~3.6]{BayonEtAl2023} for the general \((\alpha,\beta,\gamma)\) formulation.

Let
\[
A_0:=\alpha+\beta,
\qquad
C:=\frac{\alpha+\gamma}{\alpha+\beta},
\]
and assume \(A_0>0\) and \(C>0\).

For a fixed horizon \(k\), let \(V^{(k)}(r)\) denote the expected payoff
of the threshold rule with threshold \(r\). Solving the recurrence in
\cite[Sec.~3.6]{BayonEtAl2023} (or using \eqref{eq:affine_record_payoff} with \(a=A_0\), \(b=-\beta\), \(w=-\gamma\), and with the fixed horizon \(n\) replaced by \(k\)) gives, for \(1\leq r\leq k\),
\[
V^{(k)}(r)
=
-\beta
+
\frac{r}{k}
\left[
A_0S_r^{k-1}+\beta-\gamma
\right].
\]
For \(r=0\), the rule accepts the first applicant, and hence
\[
V^{(k)}(0)
=
-\beta+\frac{A_0}{k}.
\]

For \(k<r\), both threshold rules \(r\) and \(r-1\) make no selection,
so their payoff difference is zero. For \(k\geq r\), a direct
calculation gives
\[
V^{(k)}(r)-V^{(k)}(r-1)
=
\frac{A_0}{k}
\left(S_r^{k-1}-C\right).
\]
Therefore, conditioning on \(N\), for \(1\leq r\leq n\),
\[
\Delta_n(r)
=
\frac{A_0}{n}
\sum_{k=r}^{n}\frac1k
\left(S_r^{k-1}-C\right)
\]
and thus
\[
\Delta_n(r+1)-\Delta_n(r)
=
\frac{A_0}{nr}
\left(
C-S_{r+1}^{n}
\right).
\]
Since \(S_{r+1}^{n}\) is strictly decreasing in \(r\), the right-hand side
changes sign at most once, from negative to positive. Hence
\(\Delta_n(r)\) first decreases and then increases. Moreover,
\[
\Delta_n(n)
=
-\frac{A_0C}{n^2}<0,
\]
so the terminal increasing part remains negative. Therefore
\(\Delta_n(r)\) changes sign at most once, from nonnegative to nonpositive,
and consequently
$\{V_n(r)\}_{r=0}^{n}$ is unimodal.

We define the rescaled increments
\begin{equation}\label{eq:uniform_wld_score}
D_n(r)
:=
\frac{n^2}{A_0}\Delta_n(r)
=
n\left[
\frac12\left((S_r^n)^2-T_r^n\right)
-CS_r^n
\right].
\end{equation}

Uniformly when \(x=r/n\) ranges over a compact subinterval of
\((0,1)\),
\[
\frac{D_n(r)}{n}
=
\frac12\left(\log\frac nr\right)^2
-
C\log\frac nr
+
O(n^{-1}).
\]
The limiting expression has zeros at \(x=1\) and
\[
x=\e^{-2C}.
\]
The former is a boundary zero, while the latter is the unique zero in
\((0,1)\).
Hence
\begin{equation}\label{eq:uniform_wld_constant}
\ell
=
\e^{-2C}
=
\exp\left(
-\frac{2(\alpha+\gamma)}{\alpha+\beta}
\right).
\end{equation}

The leading term in  \(D_n(r)\) is positive for \(r/n<\ell\) and negative for \(r/n>\ell\). By unimodality, the optimal threshold satisfies
\[
\frac{r_n^{\mathrm{opt}}}{n}
\longrightarrow
\ell= \e^{-2C}.
\]

We now determine the constant-order correction. Let $r=\ell n+u$, where \(u\) remains bounded. Using
\eqref{eq:EM_expansion}, \eqref{eq:T_EM_expansion},
we obtain, uniformly for bounded \(u\),
\begin{equation}\label{eq:uniform_wld_expansion}
D_n(\ell n+u)
=
-\frac{C}{\ell}(u-c)
+
O(n^{-1}),
\end{equation}
where
\begin{equation}\label{eq:uniform_wld_shift}
c=
\frac{C-1+(C+1)\ell}{2C}.
\end{equation}

It remains to check that \(0<c<1\). In fact, the stronger inequality $0<c<\frac12$
holds. Positivity is immediate when \(C\geq1\). If \(0<C<1\), then
\[
c>0
\quad\Longleftrightarrow\quad
2C<
\log\left(\frac{1+C}{1-C}\right)
=
2\int_0^C\frac{dt}{1-t^2},
\]
and the last integral is strictly greater than \(2C\). Moreover,
\[
\frac12-c
=
\frac{1-(C+1)\e^{-2C}}{2C}
>0.
\]
Thus the hypotheses of
Theorem~\ref{thm:general_threshold_principle} are satisfied with
\[
\ell=\e^{-2C},
\qquad
A=-\frac{C}{\ell}<0,
\qquad
c=\frac{C-1+(C+1)\ell}{2C}.
\]

\begin{remark}
The classical uniform random-horizon secretary problem corresponds to
\(\alpha=1\), \(\beta=0\), and \(\gamma=0\). Thus \(C=1\) and
\[
\ell=\e^{-2},
\qquad
c=\e^{-2}.
\]
The symmetric win--lose--or--draw problem corresponds to
\(\alpha=\beta=1\), \(\gamma=0\). In this case \(C=1/2\), and hence
\[
\ell=\e^{-1},
\qquad
c=\frac{3-\e}{2\e}.
\]
\end{remark}

% ------------------------------------------------------------
\subsection{Uniform random-horizon Best-or-Worst and Postdoc problems}
\label{subsec:uniform_bow_general}

We now consider the Best-or-Worst and Postdoc problems when the number of
applicants \(N\) is uniformly distributed on \(\{1,\dots,n\}\).
The fixed-horizon versions of these problems have been studied in
\cite{Rose1982ChoiceAssignment,Rose1982Nonextremal,Vanderbei2021}.
For the uniform random-horizon setting, Bay\'on et al.\
\cite{BayonEtAl2019} showed that the two problems have the same optimal
cutoff. We therefore carry out the analysis for the Best-or-Worst problem.

For a fixed horizon \(k\), the probability of success of the threshold rule
with threshold \(r\) is
\[
V^{(k)}(r)
=
\frac{2r(k-r)}{k(k-1)},
\qquad
1\leq r<k;
\]
see \cite[Prop.~2]{BayonEtAl2019}. Averaging over the uniform random horizon
gives, for \(1\leq r\leq n\),
\begin{equation}\label{eq:uniform_bow_payoff}
V_n(r)
=
\frac{2r}{n}
\left(
S_r^{n-1}+\frac{r}{n}-1
\right);
\end{equation}
see \cite[Sec.~4]{BayonEtAl2019}. Consequently, for \(2\leq r\leq n\),
\begin{equation}\label{eq:uniform_bow_increment}
\Delta_n(r)
=
V_n(r)-V_n(r-1)
=
\frac{2}{n}
\left(
S_r^{n-1}+\frac{2r-1}{n}-2
\right).
\end{equation}

We first verify unimodality on \(\{1,\dots,n\}\). For
\(2\leq r\leq n-1\),
\[
\Delta_n(r+1)-\Delta_n(r)
=
\frac{2}{n}
\left(
-\frac1r+\frac2n
\right).
\]
These differences change sign at most once, from negative to positive, so
\(\Delta_n(r)\) first decreases and then increases. Since
\[
\Delta_n(n)=-\frac{2}{n^2}<0,
\]
the terminal increasing part remains negative. Thus \(\Delta_n(r)\) changes
sign at most once, from nonnegative to nonpositive, and
$\{V_n(r)\}_{r=1}^{n}$ is unimodal.

We define
\begin{equation}\label{eq:uniform_bow_score}
D_n(r)
:=
\frac{n^2}{2}\Delta_n(r)
=
n\left(
S_r^{n-1}+\frac{2r-1}{n}-2
\right).
\end{equation}

Bay\'on et al.\ \cite[Thm.~5]{BayonEtAl2019} showed that the optimal
threshold satisfies
\[
\frac{r_n^{\mathrm{opt}}}{n}
\longrightarrow
\ell = \vartheta,
\]
where \(\vartheta\) is the rumour constant. 

The cutoff \(r=0\) is exceptional. Indeed,
\[
V_n(0)
=
\frac1n
\left(
1+2\sum_{k=2}^{n}\frac1k
\right)
\longrightarrow0.
\]
On the other hand, \eqref{eq:uniform_bow_payoff} and
\eqref{eq:rumour_constant} give
\[
V_n\bigl(\lfloor\vartheta n\rfloor\bigr)
\longrightarrow
2\vartheta(1-\vartheta)>0.
\]
Hence \(r=0\) is not optimal for all sufficiently large \(n\).

We now determine the constant-order correction. Let $r=\vartheta n+u$,
where \(u\) remains bounded. Using \eqref{eq:EM_expansion}, we obtain,
uniformly for bounded \(u\),
\begin{equation}\label{eq:uniform_bow_expansion}
D_n(\vartheta n+u)
=
\left(2-\frac1\vartheta\right)(u-c)
+
O(n^{-1}),
\end{equation}
where
\begin{equation}\label{eq:uniform_bow_shift}
c
=
\frac{1-3\vartheta}{2(1-2\vartheta)}.
\end{equation}
The defining equation \eqref{eq:rumour_constant} gives $\frac15<\vartheta<\frac14$, and therefore
\[
A:=2-\frac1\vartheta<0,
\qquad
0<c<\frac12<1.
\]

Thus all the hypotheses used in the proof of
Theorem~\ref{thm:general_threshold_principle} hold on the restricted
threshold set \(\{1,\dots,n\}\). Let
\[
q_j\longrightarrow\infty,
\qquad
p_j-\vartheta q_j\longrightarrow0.
\]
Since \(p_j\to\infty\), both \(p_j\) and \(p_j+1\) belong to
\(\{1,\dots,q_j\}\) for all sufficiently large \(j\). The local expansion
then gives
\[
D_{q_j}(p_j)>0,
\qquad
D_{q_j}(p_j+1)<0.
\]
By unimodality, \(p_j\) is the unique maximizer over
\(\{1,\dots,q_j\}\). Since the exceptional threshold \(r=0\) is not
optimal for all sufficiently large \(q_j\), it follows that \(p_j\) is
the unique optimal threshold over \(\{0,\dots,q_j\}\).

Finally, Bay\'on et al.~\cite[Cor.~1]{BayonEtAl2019} showed that, for
every threshold \(r>1\), the Best-or-Worst success probability is twice
the corresponding Postdoc success probability. The Postdoc success
probabilities at the exceptional thresholds \(r=0,1\) are
\(O((\log n)/n)\), whereas the success probability at a threshold
\(r=\vartheta n+O(1)\) tends to
\(\vartheta(1-\vartheta)>0\).
Consequently, the two problems have the same set of optimal thresholds for
all sufficiently large \(n\). In particular, for all sufficiently large
\(j\), the threshold \(p_j\) is also the unique optimal threshold for the
uniform random-horizon Postdoc problem with upper horizon \(q_j\).

\begin{remark}[Relation with a previous approximation]
Let \(M(n)\) denote the optimal cutoff in the notation of Bay\'on
et al.~\cite{BayonEtAl2019}. It is asserted in
\cite[Theorem~6(ii)]{BayonEtAl2019} that
\[
M(n)-n\vartheta
\longrightarrow
\frac{1}{4-2\e^{2-2\vartheta}}.
\]
This limit cannot hold literally: it would imply
\[
M(n+1)-M(n)\longrightarrow\vartheta,
\]
which is impossible because the left-hand side is integer-valued and
\(0<\vartheta<1\).

Their continuous calculation underlying the cited result is nevertheless
valid: if \(\alpha(n)\) denotes the real maximizer of the smooth
approximating payoff then 
\(
\alpha(n)-n\vartheta
\longrightarrow
\frac{1}{4-2\e^{2-2\vartheta}}= c-\frac12.
\)
Although this does not imply the corresponding limit for the integer
optimizer \(M(n)\), it identifies the correct continuous center. Indeed,
rounding \(
n\vartheta+c-\frac12
\) to the nearest integer gives, away from ties, \(
\lfloor n\vartheta+c\rfloor,
\) which agrees with the rounding law obtained here.
\end{remark}

% ============================================================
\subsection{A shifted case: the Poisson model}
\label{sec:poisson_general}

The preceding applications all fall within the rounding regime
\(0<c<1\) of Theorem~\ref{thm:general_threshold_principle}. We now
consider a model with the same leading threshold proportion as the
classical fixed-horizon secretary problem but with different
second-order behavior.

Suppose that the number of applicants \(N\) has a Poisson distribution
with mean \(\lambda>0\). This model was originally studied by Presman and
Sonin~\cite{PreSon72}. Conditional on \(N=k\), the classical fixed-horizon
success probability is
\[
V^{(k)}(r)
:=
\mathbb P(\text{success}\mid N=k)
=
\frac{r}{k}S_r^{k-1},
\qquad k>r\geq1,
\]
while \(V^{(k)}(r)=0\) for \(k\leq r\). For \(r=0\), the rule accepts the
first applicant, so its conditional success probability is \(1/k\) when
\(k\geq1\). Consequently,
\begin{equation}\label{eq:poisson_payoff}
P_\lambda(r)
=
\begin{cases}
\displaystyle
\e^{-\lambda}
\sum_{k=1}^{\infty}
\frac{\lambda^k}{k!\,k},
& r=0,\\[4mm]
\displaystyle
\e^{-\lambda}
\sum_{k=r+1}^{\infty}
\frac{\lambda^k}{k!}
\frac{r}{k}S_r^{k-1},
& r\geq1.
\end{cases}
\end{equation}

For \(r\geq1\), write
\[
\Delta_\lambda(r)
:=
P_\lambda(r)-P_\lambda(r-1).
\]

\begin{lemma}
\label{lem:poisson_unimodality}
For every \(\lambda>0\), the payoff sequence
\(\{P_\lambda(r)\}_{r\geq0}\) is unimodal.
\end{lemma}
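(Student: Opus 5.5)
The plan is to mirror the unimodality arguments of Lemma~\ref{lem:power_biased_unimodality} and the uniform win--lose--or--draw model. The steps are: compute \(\Delta_\lambda(r)\) by conditioning on \(N\); show that its first differences change sign at most once; then use the limiting behaviour as \(r\to\infty\) to rule out a second sign change of \(\Delta_\lambda\).

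\textbf{Step 1: the increments.} For a fixed horizon \(k\) and \(1\le r\le k\), the fixed-horizon increment is
\[
V^{(k)}(r)-V^{(k)}(r-1)=\frac1k\bigl(S_r^{k-1}-1\bigr).
\]
This is \eqref{eq:affine_harmonic_increment} with \(a=1\), \(C=1\), and one checks directly that it also holds at \(r=1\) using \(V^{(k)}(0)=1/k\). For \(k<r\) the increment is zero. Summing against the Poisson weights gives
\[
\Delta_\lambda(r)=\e^{-\lambda}\sum_{k\ge r}\frac{\lambda^k}{k!\,k}\bigl(S_r^{k-1}-1\bigr),
\]
and all series converge absolutely because \(S_r^{k-1}\le\log k+1\).

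\textbf{Step 2: second differences.} Subtracting consecutive increments, the \(k=r\) term contributes \(\lambda^r/(r!\,r)\). For \(k\ge r+1\) one uses \(S_{r+1}^{k-1}-S_r^{k-1}=-1/r\). This yields
\[
\Delta_\lambda(r+1)-\Delta_\lambda(r)=\frac{\e^{-\lambda}}{r}\,\frac{\lambda^r}{r!}\,H(r),
\qquad
H(r):=1-\sum_{j\ge1}\frac{\lambda^{j}}{(r+1)(r+2)\cdots(r+j)\,(r+j)}.
\]
Each summand is strictly decreasing in \(r\), so \(H\) is strictly increasing. Hence the second differences change sign at most once, from negative to positive, and \(\Delta_\lambda\) first (weakly) decreases and then strictly increases.

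\textbf{Step 3: the terminal phase.} This is the step I expect to need the most care, since the index set is infinite and there is no last increment such as \(\Delta_n(n)<0\) to anchor the argument. The replacement is a limit:
\[
|\Delta_\lambda(r)|\le\sum_{k\ge r}\frac{\lambda^k}{k!}(\log k+1)\longrightarrow0 .
\]
If the increasing phase is nonempty, then \(\Delta_\lambda\) is strictly increasing on it with limit \(0\), so every increment there is strictly negative. During the decreasing phase, \(\Delta_\lambda\) can pass from positive to nonpositive at most once, and once nonpositive it stays negative through the increasing phase. Therefore \(\Delta_\lambda\) changes sign at most once, from positive to nonpositive.

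It follows that \(P_\lambda\) increases up to some index and is nonincreasing afterwards. Moreover \(P_\lambda(r)\le\mathbb P(N>r)\to0\), so a maximizer exists, the set of maximizers is a finite block \(\{a,\dots,b\}\), and \(\{P_\lambda(r)\}_{r\ge0}\) is unimodal in the sense of Section~\ref{sec:general_principle}.
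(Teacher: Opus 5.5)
Your proof is correct and follows the same route as the paper. You condition on \(N\) to get the increments, show that \(\Delta_\lambda(r+1)-\Delta_\lambda(r)\) changes sign at most once (from negative to positive), and use \(\Delta_\lambda(r)\to0\) to force the terminal increasing phase to be nonpositive. The one difference is that your \(H(r)\) is the paper's \(H_\lambda(r)=\pi_r-\sum_{k>r}\pi_k/k\) divided by \(\pi_r\), and this normalization makes it termwise strictly increasing. That lets you skip the paper's separate computation of \(H_\lambda(r+1)-H_\lambda(r)\) and its tail estimate showing \(H_\lambda\) is eventually positive, which is a small but genuine simplification.
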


\begin{proof}
Let
\[
\pi_k:=\mathbb P(N=k),
\qquad
H_\lambda(r)
:=
\pi_r-\sum_{k=r+1}^{\infty}\frac{\pi_k}{k}.
\]

A direct calculation gives
\[
\Delta_\lambda(r+1)-\Delta_\lambda(r)
=
\frac{H_\lambda(r)}{r},
\]
while
\[
H_\lambda(r+1)-H_\lambda(r)
=
\pi_r
\left(
\frac{\lambda(r+2)}{(r+1)^2}-1
\right).
\]
The expression in parentheses is strictly decreasing in \(r\), so
\(H_\lambda(r)\) first increases and then decreases. Moreover, for
\(r+1>\lambda\),
\[
0\leq
\frac1{\pi_r}\sum_{k=r+1}^{\infty}\frac{\pi_k}{k}
\leq
\frac{\lambda}
{(r+1)^2\left(1-\lambda/(r+1)\right)}
\longrightarrow0.
\]
Thus \(H_\lambda(r)>0\) eventually. Since its terminal decreasing part
is therefore positive, \(H_\lambda(r)\) changes sign at most once, from
negative to positive.
Consequently, \(\Delta_\lambda(r)\) first decreases
and then increases. Since
\[
0\leq P_\lambda(r)\leq\mathbb P(N>r)\longrightarrow0,
\]
we have \(\Delta_\lambda(r)\to0\). The terminal increasing part of
\(\Delta_\lambda\) is therefore nonpositive.
Hence
\(\Delta_\lambda(r)\) changes sign at most once, from nonnegative to
nonpositive, and \(\{P_\lambda(r)\}_{r\geq0}\) is unimodal.
\end{proof}

Let \(r_\lambda\) denote the optimal threshold. Presman and
Sonin~\cite{PreSon72} showed that
\[
r_\lambda
\sim
\frac{\lambda}{\e}.
\]
Since \(\lambda=\mathbb E[N]\) is the natural scale parameter of the
model, the analogue of the leading threshold proportion is therefore
\[
\ell
:=
\lim_{\lambda\to\infty}
\frac{r_\lambda}{\lambda}
=
\frac1{\e}.
\]

Define the rescaled payoff increments by
\[
D_\lambda(r)
:=
\lambda^2\Delta_\lambda(r),
\qquad r\geq1.
\]

\begin{lemma}[Poisson local expansion]
\label{lem:poisson_local_expansion}
Let \(\ell=1/\e\). Uniformly for bounded \(u\) such that
\(\ell\lambda+u\) is an integer threshold,
\begin{equation}\label{eq:poisson_local_expansion}
D_\lambda(\ell\lambda+u)
=
-\frac1\ell
\left(
u-c_{\mathrm P}
\right)
+
o(1)
\qquad
(\lambda\to\infty),
\end{equation}
where
\begin{equation}\label{eq:poisson_shift}
c_{\mathrm P}
=
\frac12-\frac2{\e}
\in(-1,0).
\end{equation}
\end{lemma}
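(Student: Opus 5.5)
The plan is to write \(\Delta_\lambda\) as a Poisson expectation of a smooth function of \(N\), and then expand that expectation around \(N=\lambda\) using the Poisson central moments.

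\textbf{Step 1 (increment as an expectation).} Conditioning on \(N=k\), the fixed-horizon increment from Subsection~\ref{subsec:fixed_harmonic} (with \(a=1,b=w=0\)) is \(V^{(k)}(r)-V^{(k)}(r-1)=\frac1k(S_r^{k-1}-1)\) for \(k\ge r\), and it is \(0\) for \(k<r\). Hence
\[
\Delta_\lambda(r)=\mathbb E\Bigl[\tfrac1N\bigl(S_r^{N-1}-1\bigr)\mathbf 1\{N\ge r\}\Bigr].
\]
Take \(r=\lambda/\e+u\) with \(u\) bounded.
\begin{itemize}
\item Since \(r\approx\lambda/\e\), Chernoff bounds make \(\mathbb P(N<r)\) exponentially small in \(\lambda\).
\item On \(N\ge r\), the integrand is \(O(\log\lambda)\).
\end{itemize}
So we may drop the indicator, and restrict further to the window \(|N-\lambda|\le\lambda^{2/3}\), at a cost of \(o(\lambda^{-2})\).

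\textbf{Step 2 (smooth surrogate).} On the window, \eqref{eq:EM_expansion} gives \(S_r^{k-1}=\log(k/r)+\frac1{2r}-\frac1{2k}+O(\lambda^{-2})\) uniformly. So the integrand equals \(g(k)+O(\lambda^{-3})\), where
\[
g(k):=\frac1k\Bigl(\log\frac kr-1+\frac1{2r}-\frac1{2k}\Bigr).
\]
This function is smooth in \(k\) near \(\lambda\), with \(g^{(j)}(k)=O(\lambda^{-1-j})\) on the window. Taylor expansion to fourth order around \(\lambda\) uses the Poisson central moments \(\mathbb E(N-\lambda)=0\), \(\mathbb E(N-\lambda)^2=\lambda\), \(\mathbb E(N-\lambda)^3=\lambda\), \(\mathbb E(N-\lambda)^4=3\lambda^2+\lambda\). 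It gives
\[
\mathbb E g(N)=g(\lambda)+\tfrac{\lambda}{2}g''(\lambda)+O(\lambda^{-3}).
\]
The third-order term is \(O(\lambda^{-4}\cdot\lambda)\). The remainder is bounded by \(\sup|g^{(4)}|\,\mathbb E(N-\lambda)^4=O(\lambda^{-5}\lambda^2)\). The truncation to the window is handled by the same tail bound as before.

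\textbf{Step 3 (computation).}
\begin{itemize}
\item First term: \(\log(\lambda/r)=1-\e u/\lambda+O(\lambda^{-2})\) gives \(\lambda^2 g(\lambda)=-\e u+\frac{\e-1}{2}+O(\lambda^{-1})\).
\item Second term: only the principal part \((\log k-\log r-1)/k\) matters. Its second derivative is \((2(\log k-\log r-1)-3)/k^3\), which at \(k=\lambda\) equals \(-3\lambda^{-3}+O(\lambda^{-4})\). Hence \(\lambda^3 g''(\lambda)/2=-\frac32+o(1)\).
\item Summing, \(D_\lambda(\ell\lambda+u)=-\e u+\frac{\e-4}{2}+o(1)=-\e\bigl(u-(\tfrac12-\tfrac2\e)\bigr)+o(1)\). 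This is \eqref{eq:poisson_local_expansion} with \(c_{\mathrm P}\) as in \eqref{eq:poisson_shift}.
\item Finally, \(-1<\frac12-\frac2\e<0\) follows from \(\frac{4}{3}<\e<4\).
\end{itemize}

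\textbf{Main obstacle.} The delicate point is the uniform justification of Step 2, namely that the variance term \(\frac\lambda2 g''\) is exactly of constant order after scaling by \(\lambda^2\), and that nothing else contributes at that order. The Euler--Maclaurin error must be uniform over \(k\) in the window. The Taylor remainder must be controlled by the fourth moment rather than the third. The contribution of \(N\) outside the window must be shown to be \(o(\lambda^{-2})\) via a Poisson Chernoff bound together with the crude bound \(|g|=O(\log\lambda)\) for \(k\ge r\) and the polynomial growth of \(g\) in \(k\). I would handle these three errors separately, and check that all of them are uniform in bounded \(u\).
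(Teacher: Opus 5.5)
Your proposal is correct and follows essentially the same route as the paper. You restrict to the window $|N-\lambda|\le\lambda^{2/3}$, replace the integrand by the Euler--Maclaurin surrogate $\frac1k\bigl(\log\frac kr-1+\frac1{2r}-\frac1{2k}\bigr)$ up to $O(\lambda^{-3})$, and Taylor-expand about $\lambda$ to reduce to $g(\lambda)+\frac{\lambda}{2}g''(\lambda)$, which you correctly evaluate to $\lambda^{-2}\bigl(-\e u+\frac{\e}{2}-2\bigr)$. One small correction to your closing remark: the fourth-order expansion is not needed. A third-order remainder bounded via $\mathbb E|N-\lambda|^3=O(\lambda^{3/2})$ already gives an error of $O(\lambda^{-5/2})=o(\lambda^{-2})$, which is all the $o(1)$ statement requires. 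Your version simply yields the sharper $O(\lambda^{-1})$ error after scaling.
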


\begin{proof}
For \(k\geq r\), subtracting the two fixed-horizon payoffs gives
\[
V^{(k)}(r)-V^{(k)}(r-1)
=
\frac1k\left(S_r^{k-1}-1\right).
\]
Consequently,
\[
\Delta_\lambda(r)
=
\mathbb E\left[
\frac1N\left(S_r^{N-1}-1\right);
\,N\geq r
\right].
\]

Let \(r=\ell\lambda+u\), with \(u\) bounded. Since \(\ell<1\), the
inequality
\[
|N-\lambda|\leq\lambda^{2/3}
\]
implies \(N\geq r\) for all sufficiently large \(\lambda\), uniformly
for bounded \(u\). Standard Poisson tail bounds, together with a bound
on the summand, show that restricting the expectation to this central
range produces an error \(o(\lambda^{-2})\).

On the central range, Euler--Maclaurin summation gives
\[
\frac1N\left(S_r^{N-1}-1\right)
=
f_r(N)+O(\lambda^{-3}),
\]
where
\[
f_r(x)
:=
\frac1x
\left(
\log\frac{x}{r}-1+\frac1{2r}-\frac1{2x}
\right).
\]
Taylor expansion about \(x=\lambda\), together with
\[
\mathbb E(N-\lambda)=0,
\qquad
\mathbb E(N-\lambda)^2=\lambda,
\]
and the exponential smallness of the complementary range, yields
\[
\Delta_\lambda(r)
=
f_r(\lambda)
+
\frac{\lambda}{2}f_r''(\lambda)
+
o(\lambda^{-2}),
\]
uniformly for bounded \(u\).
Direct expansion gives
\[
f_r(\lambda)
+
\frac{\lambda}{2}f_r''(\lambda)
=
\frac1{\lambda^2}
\left(
-\frac{u}{\ell}
+\frac1{2\ell}
-2
\right)
+
O(\lambda^{-3}),
\]
uniformly for bounded \(u\). Therefore,
\[
D_\lambda(\ell\lambda+u)
=
-\frac1\ell
\left(
u-(\frac12-\frac2{\e})
\right)
+
o(1)
\qquad
(\lambda\to\infty).
\]

\end{proof}

By Lemma~\ref{lem:poisson_unimodality}, the payoff sequence is unimodal,
and the sign-change argument used in
Remark~\ref{remark:Shifted} applies verbatim to the present infinite
threshold set, with shift \(-1\). Hence, if
\((p_j,q_j)\) are integer pairs such that
\[
q_j\longrightarrow\infty,
\qquad
p_j-\frac{1}{\e} q_j\longrightarrow0,
\]
then \(p_j-1\) is the unique optimal threshold for the Poisson model with
mean \(q_j\) for all sufficiently large \(j\).

% ============================================================

\appendix

\section{The shift in the power-biased model}
\label{app:power_biased_shift}

\begin{prop}
\label{prop:power_biased_shift}
For every \(a\geq0\),
\[
0<c_a<\frac12.
\]
\end{prop}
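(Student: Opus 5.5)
The plan is to make $c_a=-J_a(\ell_a)/I_a'(\ell_a)$ explicit and reduce both inequalities to elementary one-variable inequalities. By \eqref{eq:Iprime_negative}, $I_a'(\ell_a)<0$. Hence
\[
c_a>0\iff J_a(\ell_a)>0,
\qquad
c_a<\tfrac12\iff 2J_a(\ell_a)<-I_a'(\ell_a).
\]
So it suffices to prove two sign statements about explicit functions evaluated at $x=\ell_a$. I will treat $a=0$, $a=1$ and $a\in(0,1)\cup(1,\infty)$ separately, because the integral $\int_x^1y^{a-2}\,dy$ changes form at $a=1$ and $I_a$ changes form at $a=0$.

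\emph{Case $a=0$.} Here $\ell_0=\e^{-2}$ and $L:=\log(1/\ell_0)=2$. Direct substitution gives $J_0(\ell_0)=\tfrac12\bigl(L+(L-2)/\ell_0\bigr)=1$. Hence $c_0=\e^{-2}\in(0,\tfrac12)$.

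\emph{Case $a>0$.} Write $L:=\log(1/\ell_a)>0$ and $t:=\ell_a^a\in(0,1)$. The defining equation \eqref{eq:ella_equation} says $(a+1)(1-t)=aL$. This gives
\[
\frac{1}{2\ell_a}\int_{\ell_a}^1y^{a-1}\,dy=\frac{L}{2(a+1)\ell_a},
\qquad
-I_a'(\ell_a)=\frac{1-(a+1)t}{a\ell_a},
\]
and the concavity argument already shows $(a+1)t<1$. Multiplying everything by $2\ell_a>0$, the two target inequalities become
\[
0<\ell_a(L-1)-\ell_a^{a}+\frac{L}{a+1}-\ell_a\!\int_{\ell_a}^1y^{a-2}\,dy<\frac{1-(a+1)t}{a}.
\]
The integral is $(1-\ell_a^{a-1})/(a-1)$ for $a\ne1$ and $L$ for $a=1$.

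For positivity, I will not expand everything. I will regroup $J_a$ as
\[
2J_a(x)=\Bigl(\log\tfrac1x-1-x^{a-1}\Bigr)+\int_x^1y^{a-2}\Bigl(\tfrac yx-1\Bigr)dy.
\]
The second term is manifestly positive. I will then bound it below by the tail of the first bracket, for instance by restricting the integral to $y\in[x,1]$ and comparing with $\int_x^1 y^{-1}\,dy=L$ using $y^{a-1}\ge x^{a-1}$ or $\le$ according to whether $a\ge1$. The constraint $(a+1)(1-t)=aL$ is needed to absorb the $-1-x^{a-1}$ term. The upper bound $c_a<\tfrac12$ will be handled by parametrizing by $t\in(0,1)$ via $L=(a+1)(1-t)/a$ and $\ell_a=t^{1/a}$. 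After this substitution it becomes an inequality in $(a,t)$ on the curve defined by \eqref{eq:ella_equation}. I expect to prove it by differentiating in $a$ along that curve, and by checking the endpoint limits $a\downarrow0$ (recovering $c_0=\e^{-2}$) and $a\to\infty$ (where $\ell_a\to1$ and both sides can be expanded).

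The main obstacle is the uniformity in $a$. Both sides are transcendental along the implicit curve $\ell_a$, and pointwise numerical checks do not suffice. If the regrouping argument does not yield clean bounds, the fallback is a monotonicity argument in $a$. For that, I would write $c_a$ as a function of $L$ alone by eliminating $a$ through \eqref{eq:ella_equation} wherever possible, and then reduce to single-variable inequalities of the form $\log(1+s)$ versus rational functions of $s$, as in the win--lose--or--draw verification of $0<c<\tfrac12$.
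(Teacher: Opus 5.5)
There is a genuine gap. Your reduction to \(J_a(\ell_a)>0\) and \(2J_a(\ell_a)<-I_a'(\ell_a)\) is correct, and so is the case \(a=0\). For \(a>0\), however, which is the substantive part of the proposition, neither inequality is proved, and the strategies you sketch do not work as stated.

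\emph{Positivity.} Your comparison only points the right way when \(a\ge1\). For \(0<a<1\), the bound \(y^{a-1}\le\ell_a^{a-1}\) bounds the positive integral from \emph{above}. The only usable pointwise lower bound, \(y^{a-1}\ge1\), gives
\[
2J_a(\ell_a)\ge\ell_a^{-1}-2-\ell_a^{a-1},
\]
and the right side tends to \(-2\) as \(a\downarrow0\), whereas \(2J_0(\ell_0)=2\). Even for \(a\ge1\), the resulting bound \(2J_a(\ell_a)\ge(L-1)+\ell_a^{a-2}\bigl(1-(2+L)\ell_a\bigr)\) has a negative second term for large \(a\). It therefore needs a quantitative lower bound on \(L-1\) that you have not established.

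\emph{Upper bound.} Your endpoint data are wrong. At \(x=\e^{-1}\) the left side of \eqref{eq:ella_equation} equals \(1-(a+1)\e^{-a}>0\), so \(\ell_a<\e^{-1}\). In fact \(1<L<1+1/a\), so \(\ell_a\to\e^{-1}\), not \(1\). Moreover \(c_a\to\frac12\) as \(a\to\infty\), so the inequality is asymptotically sharp, and endpoint checks plus an unspecified monotonicity in \(a\) cannot settle it. The case \(a=1\) is also announced but never carried out; it reduces to \(\ell_1<\frac13\).

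The missing idea is to use the defining equation fully, through \(h:=L-1\). Since \((a+1)t=1-ah\), you get \(-\ell_aI_a'(\ell_a)=h\). Your own reduced chain then collapses to
\[
\ell_a h<\ell_a\int_{\ell_a}^1y^{a-2}\,dy<h(1+\ell_a).
\]
The left inequality is equivalent to \(c_a<\frac12\). It is immediate for \(a\le1\): on \((0,1]\) we have \(y^{a-2}\ge y^{-1}\), so the integral is at least \(L>h\). For \(a>1\) it reads \((a-1)h<1-\e^{-(a-1)(1+h)}\). This follows from \(h<1/a\) (that is, \(t>0\)) together with \(\e^{-y}<1/(1+y)\); it is the content of the paper's identity \eqref{eq:J_upper_identity}.

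The right inequality is equivalent to \(c_a>0\), and it is the genuinely delicate one. The paper handles it in three steps:
\begin{itemize}
\item it proves \(1/(a+1)<h<1/a\);
\item it shows by implicit differentiation that \(h\) decreases in \(a\);
\item it uses the identity \(2\ell_a(a^2-1)J_a(\ell_a)=\psi(h)+(a-1)\bigl(ah+\ell_a((a+1)h-1)\bigr)\), with \(\psi(t)=1-t-2\e^{-(1+t)}\).
\end{itemize}
Since \(\psi\) is decreasing and vanishes at \(h(1)\), both terms on the right have the sign of \(a-1\), which forces \(J_a(\ell_a)>0\). Your plan lacks this kind of sharp localization of \(h\) combined with a sign identity anchored at \(a=1\).
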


\begin{proof}
For \(a=0\), we have \(\ell_0=\e^{-2}\), and direct substitution in
\eqref{eq:power_J_definition} gives
\[
J_0(\ell_0)=1,
\qquad
I_0'(\ell_0)=-\e^2.
\]
Hence
\[
c_0=-\frac{J_0(\ell_0)}{I_0'(\ell_0)}
=\e^{-2}<\frac12.
\]
Suppose henceforth that \(a>0\). Write
\[
x:=\ell_a,
\qquad
h:=-1-\log x.
\]
The defining equation for \(\ell_a\) gives
\begin{equation}\label{eq:xh_relations}
x=\e^{-(1+h)},
\qquad
x^a=\frac{1-ah}{a+1},
\qquad
I_a'(x)=-\frac{h}{x}.
\end{equation}
Moreover, \(x<\e^{-1}\), since the left-hand side of
\eqref{eq:ella_equation} at \(x=\e^{-1}\) equals
\[
1-(a+1)\e^{-a}>0.
\]
Thus \(h>0\), while \eqref{eq:xh_relations} gives \(h<1/a\).

In fact,
\begin{equation}\label{eq:h_bounds}
\frac1{a+1}<h<\frac1a.
\end{equation}
Indeed, the function
\[
G_a(t):=\log\frac{a+1}{1-at}-a(1+t)
\]
is strictly increasing on \((0,1/a)\), satisfies \(G_a(h)=0\), and
\[
G_a\left(\frac1{a+1}\right)
=
2\log(a+1)-\frac{a(a+2)}{a+1}<0.
\]
Here the last inequality is the elementary bound
\(2\log t<t-t^{-1}\) for \(t>1\).

Implicit differentiation of \(G_a(h)=0\) gives
\[
h'(a)
=
-\frac{(1+h)((a+1)h-1)}{ah(a+1)}
<0.
\]
Let
\[
\psi(t):=1-t-2\e^{-(1+t)}.
\]
Since \(\psi'(t)<0\) for \(t>0\), and the equation defining \(\ell_1\)
gives \(\psi(h(1))=0\), we obtain
\begin{equation}\label{eq:psi_sign}
\operatorname{sgn}\psi(h(a))
=
\operatorname{sgn}(a-1).
\end{equation}

For \(a\neq1\), direct simplification using
\eqref{eq:xh_relations} yields
\begin{align}
2x(a^2-1)J_a(x)
&=
\psi(h)
+(a-1)\bigl(ah+x((a+1)h-1)\bigr),
\label{eq:J_sign_identity}\\
J_a(x)+\frac12I_a'(x)
&=
-\frac{1-(a-1)h-x^{a-1}}{2(a-1)}.
\label{eq:J_upper_identity}
\end{align}
By \eqref{eq:h_bounds}, the expression
\[
ah+x((a+1)h-1)
\]
is positive. Hence \eqref{eq:psi_sign} and
\eqref{eq:J_sign_identity} imply \(J_a(x)>0\).

To use \eqref{eq:J_upper_identity}, observe that if \(0<a<1\), with
\(d=1-a\),
\[
1-(a-1)h-x^{a-1}
=
1+dh-\e^{d(1+h)}<0.
\]
If \(a>1\), with \(b=a-1\), then \eqref{eq:h_bounds} gives
\[
\e^{-b(1+h)}
<
\frac1{1+b(1+h)}
<
1-bh,
\]
and consequently
\[
1-(a-1)h-x^{a-1}>0.
\]
Thus, in both cases,
\[
J_a(x)+\frac12I_a'(x)<0.
\]
Since \(J_a(x)>0\) and \(I_a'(x)<0\), it follows that
\[
0<c_a=-\frac{J_a(x)}{I_a'(x)}<\frac12
\qquad (a>0,\ a\neq1).
\]

Finally, when \(a=1\), direct substitution gives
\[
c_1=\frac{1-3\ell_1}{2(1-2\ell_1)}.
\]
The defining function
\[
-\log x-2(1-x)
\]
is negative at \(x=1/4\), so its interior zero satisfies
\(\ell_1<1/4\). The inequalities \(0<c_1<1/2\) follow immediately.
\end{proof}

% ============================================================
\section*{Acknowledgements}

The author thanks J.~M. Grau for useful conversations, for suggesting Corollary~\ref{cor:derangement_threshold}, and for proposing the
investigation of the continued-fraction phenomenon for the rumour constant.

% ============================================================
\section*{Declarations}

\textbf{Funding and Competing interests} The author has no relevant financial or non-financial interests to disclose, and no funds, grants, or other support were received during the preparation of this manuscript.

\noindent \textbf{Use of generative AI} During the preparation of this manuscript, the author used OpenAI's ChatGPT as an exploratory and editorial aid in developing and checking some of the examples in Section~\ref{sec:applications}. The general threshold principle stated in Theorem~\ref{thm:general_threshold_principle}, including its mathematical formulation and proof, is the author's original work. All AI-assisted material was independently checked and revised by the author, who takes full responsibility for the content of the manuscript.

\bibliographystyle{plain}
\bibliography{references}

% ============================================

\noindent{\small\sc 4i Intelligent Insights, Tecnoincubadora Marie Curie, PCT Cartuja, 41092 Sevilla, Spain}

\noindent E-mail: {\tt \href{mailto:r.sanchez@4i.ai}{r.sanchez@4i.ai}}

\end{document}